\documentclass[11pt]{amsart}

\usepackage[T1]{fontenc}
\usepackage[latin1]{inputenc}
\usepackage[english]{babel}
\usepackage{amsmath,amssymb,amsthm}
\usepackage{enumitem}
\usepackage{mymacros}
\usepackage[margin=1.5in]{geometry}
\usepackage{hyperref}
\renewcommand{\MR}[1]{}

\newcommand{\Han}{\operatorname{Han}}
\newcommand{\HAN}{\operatorname{HAN}}
\newcommand{\mcc}{M\textsuperscript{c}Carthy}
\newcommand{\CB}{\operatorname{CB}}

\title[Multipliers of weak products]{Multipliers and operator space structure of weak product spaces}
\author{Rapha\"el Clou\^atre}
\address{Department of Mathematics, University of Manitoba, Winnipeg, Manitoba, Canada R3T 2N2}
\thanks{R.C.\ was partially supported by an NSERC Discovery grant.}
\email{raphael.clouatre@umanitoba.ca}
\author{Michael Hartz}
\address{Fakult\"at f\"ur Mathematik und Informatik, FernUniversit\"at in Hagen, 58084 Hagen, Germany}
\curraddr{Fachrichtung Mathematik, Universit\"at des Saarlandes, 66123 Saarbr\"ucken, Germany}
\email{hartz@math.uni-sb.de}
\thanks{A large part of the research for this article was carried out while R.C.\ visited M.H.\
  at Fern\-Universit\"at in Hagen. The financial support provided by FernUniversit\"at in Hagen
is gratefully acknowledged.}

\subjclass[2010]{Primary 46E22; Secondary 46L07, 47A20}
\keywords{Complete Nevanlinna--Pick space, weak product, multiplier, Hankel operator, completely
bounded map, dilation}

\begin{document}

\begin{abstract}
  In the theory of reproducing kernel Hilbert spaces, weak product spaces generalize
  the notion of the Hardy space $H^1$. For complete Nevanlinna--Pick spaces $\mathcal H$, we characterize
  all multipliers of the weak product space $\mathcal H \odot \mathcal H$. In particular,
  we show that if $\cH$ has the so-called column-row property, then the multipliers of $\cH$
  and of $\cH \odot \cH$ coincide. This result applies in particular to the classical Dirichlet space
  and to the Drury--Arveson space on a finite dimensional ball.
  As a key device, we exhibit a natural operator space structure on $\cH \odot \cH$, which enables
  the use of dilations of completely bounded maps.
\end{abstract}

\maketitle

\section{Introduction}

Let $\cH$ be a reproducing kernel Hilbert space of functions on a set $X$.
The weak product space is defined by
\begin{equation*}
  \cH \odot \cH =
  \Big\{ h = \sum_{i=1}^\infty f_i g_i : \sum_{i=1}^\infty \|f_i\| \, \|g_i\| < \infty \Big\},
\end{equation*}
with norm
\begin{equation*}
  \|h\|_{\cH \odot \cH} = \inf
  \Big\{ \sum_{i=1}^\infty \|f_i\| \, \|g_i\| : h = \sum_{i=1}^\infty f_i g_i  \Big\}.
\end{equation*}
This is a Banach function space on $X$, meaning in particular that the functionals of evaluation at points in $X$
are continuous on $\cH \odot \cH$. If $\cH = H^2(\bD)$, the Hardy space on the unit disc $\bD$,
then $\cH \odot \cH = H^1(\bD)$ with equality of norms. In fact, every function in $H^1(\bD)$
is a product of two functions in $H^2(\bD)$.
The notion of a weak product space has its origins in a famous paper of Coifman, Rochberg
and Weiss \cite{CRW76}. There, it is shown that the Hardy spaces and Bergman spaces on the unit ball $\bB_d$
satisfy $H^1( \partial \bB_d) = H^2(\partial \bB_d) \odot H^2 (\partial \bB_d)$
and $L^1_a(\bB_d) = L^2_a(\bB_d) \odot L^2_a(\bB_d)$, with equivalence of norms.
In general, one can regard weak product spaces as a replacement for the Hardy space $H^1$ in the context
of an arbitrary reproducing kernel Hilbert space. In this setting,
weak product spaces are closely related to boundedness of Hankel forms; see e.g.\
the introduction of \cite{ARS+10}. Weak product spaces have been concretely
studied for instance for the classical Dirichlet  space \cite{ARS+10,LR15,RS14}, the Drury--Arveson
space \cite{RS16} and more generally for complete Nevanlinna--Pick spaces \cite{AHM+18,JM18}.

If $\cB$ is a Banach function space on $X$, the multiplier algebra of $\cB$ is defined by
\begin{equation*}
  \Mult(\cB) = \{ \varphi: X \to \bC: \varphi \cdot f \in \cB \text{ for all } f \in \cB \}.
\end{equation*}
The closed graph theorem implies that for each $\varphi \in \Mult(\cB)$, the associated
multiplication operator $M_\varphi$ on $\cB$ is bounded, so we may define $\|\varphi\|_{\Mult(\cB)}
= \|M_\varphi\|_{B(\cB)}$. It is immediate from the definition of the weak product space
that $\Mult(\cH) \subset \Mult(\cH \odot \cH)$, with $\|\varphi\|_{\Mult(\cH \odot \cH)} \le
\| \varphi\|_{\Mult(\cH)}$ for all $\varphi \in \Mult(\cH)$.
On the other hand, for the classical Hardy space, it is not hard to see that both
$\Mult(H^1(\bD))$ and $\Mult(H^2(\bD))$ agree with $H^\infty(\bD)$, with equality of norms.
This naturally raises the following question.

\begin{quest}
  \label{quest:mult_same} Is $\Mult(\cH \odot \cH) = \Mult(\cH)$?
\end{quest}

This question was studied by Richter and Wick \cite{RW16}, who provided a positive answer
for first order weighted Besov spaces on the unit ball  using function theoretic estimates.
In particular, their
result applies to the classical Dirichlet space and to the Drury--Arveson space
$H^2_d$ for $d \le 3$, but not to $H^2_d$ for $d \ge 4$.

In this article, we characterize multipliers of $\cH \odot \cH$ for
normalized complete Nevanlinna--Pick spaces
and are thus able to give a positive answer to Question \ref{quest:mult_same} in many instances.
The prototypical example of a normalized complete Nevanlinna--Pick space is the Hardy space $H^2(\bD)$,
but there are many
other examples such as the classical Dirichlet space or the Drury-Arveson space $H^2_d$
for $d \in \bN \cup \{\infty\}$.
Normalized complete Nevanlinna--Pick spaces
can be defined in terms of the validity of a suitable
version of the Nevanlinna--Pick interpolation theorem. Equivalently,
by results of McCullough, Quiggin and Agler--\mcc, a reproducing kernel Hilbert space
is a normalized complete Nevanlinna--Pick space if and only if its reproducing kernel
$K$ is of the form
\begin{equation*}
  K(z,w) = \frac{1}{1 - \langle b(z),b(w) \rangle} \quad (z,w \in X),
\end{equation*}
where $b$ maps $X$ into the open unit ball of an auxiliary Hilbert space and satisfies $b(z_0) = 0$
for some distinguished point $z_0 \in X$. This characterization and more background information
can be found in \cite{AM02}.

A key ingredient in our analysis of $\Mult(\cH \odot \cH)$ is the observation
that $\cH \odot \cH$ can be equipped with a natural operator space structure, and we will use
this additional structure crucially.
Briefly,
$\cH \odot \cH$ can be identified as the dual of the space of compact Hankel operators
on $\cH$ (see \cite[Section 2]{AHM+18}), hence $\cH \odot \cH$ is the dual of an operator
space and thus itself an operator space.
For more details, see Section \ref{sec:prelim}.
In particular, for each $n \in \bN$, the space
$M_n(\cH \odot \cH)$ of $n \times n$ matrices with entries in $\cH \odot \cH$ carries a natural norm.
As is customary in operator space theory, for each $n\in \bN$ one associates with a linear map $A: \cH \odot \cH \to \cH \odot \cH$
its amplification $A^{(n)}: M_n(\cH \odot \cH) \to M_n(\cH \odot \cH)$, defined by applying $A$ entrywise.
The linear map $A$ is then said to be a complete contraction if each map $A^{(n)}$ is a contraction.
In particular, given a function $\theta: X \to \bC$, we say that $\theta$ is a contractive multiplier
of $\cH \odot \cH$
if the multiplication operator $M_\theta: \cH \odot \cH \to \cH \odot \cH$ is a contraction,
and that $\theta$ is a completely contractive multiplier of $\cH \odot \cH$ if $M_\theta$
is a complete contraction.

Let $M^C_1(\cH)$ be the space of sequences of multipliers of $\cH$ that are contractive when viewed as a column operator on $\cH$.
In other words, a sequence $(\varphi_n)$ belongs to $M^C_1(\cH)$ if and only if the operator
\[
 \begin{bmatrix}
     M_{\varphi_1} \\ M_{\varphi_2} \\ \vdots
    \end{bmatrix}: \cH \to \cH \otimes \ell^2
\]
is contractive.
Our main result can now be stated as follows.

\begin{thm}
  \label{thm:mult_char}
  Let $\cH$ be a normalized complete Nevanlinna--Pick space on $X$.
  The following assertions are equivalent for a function $\theta: X \to \bC$.
  \begin{enumerate}[label=\normalfont{(\roman*)}]
    \item The function $\theta$ is a contractive multiplier of $\cH \odot \cH$.
    \item The function $\theta$ is a completely contractive multiplier of $\cH \odot \cH$.
    \item There exist sequences $(\varphi_n)_{n=1}^\infty, (\psi_n)_{n=1}^\infty$ in $M^C_1(\cH)$ such that
      $\theta = \sum_{n=1}^\infty \varphi_n \psi_n$.
  \end{enumerate}
\end{thm}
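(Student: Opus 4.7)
My plan is to prove the implications in the order \text{(iii)} $\Rightarrow$ \text{(ii)} $\Rightarrow$ \text{(i)} $\Rightarrow$ \text{(iii)}; the middle implication holds by definition. For \text{(iii)} $\Rightarrow$ \text{(ii)}, I would first verify the contractive version via a direct Cauchy--Schwarz computation: if $\theta = \sum_n \varphi_n \psi_n$ with both sequences in $M^C_1(\cH)$ and $h = fg$ is an elementary product in $\cH \odot \cH$, then $\theta h = \sum_n (\varphi_n g)(\psi_n f)$ with
\begin{equation*}
  \sum_n \|\varphi_n g\| \, \|\psi_n f\|
  \le \Bigl(\sum_n \|\varphi_n g\|^2 \Bigr)^{1/2} \Bigl(\sum_n \|\psi_n f\|^2 \Bigr)^{1/2}
  \le \|g\| \, \|f\|,
\end{equation*}
by the defining column-contractivity of both sequences. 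Taking the infimum over product representations of $h$ then yields $\|M_\theta h\|_{\cH \odot \cH} \le \|h\|_{\cH \odot \cH}$. I expect the same estimate, reread at the level of matrices and interpreted in terms of the operator space structure inherited from the predual identification $\cH \odot \cH \cong (\text{compact Hankels})^*$, to deliver complete contractivity, with the column/row Hilbert-space character of the matrix amplifications matching exactly the two Cauchy--Schwarz factors above.

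The main direction is \text{(i)} $\Rightarrow$ \text{(iii)}, which I would split into two steps. The first and more delicate step is to upgrade contractivity of $M_\theta$ on $\cH \odot \cH$ to complete contractivity. Here I would exploit the operator space structure developed in Section \ref{sec:prelim}: a contractive $M_\theta$ is $w^*$-continuous on $\cH \odot \cH$, hence it dualizes to a contractive map $L$ on the space of compact Hankel operators on $\cH$. The normalized CNP kernel $K(z,w) = (1 - \langle b(z), b(w) \rangle)^{-1}$ should force $L$, and hence $M_\theta$, to be completely bounded, by an argument analogous to the classical automatic complete contractivity of multipliers on a CNP space, but now carried out on the Hankel side. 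The second step is dilation-theoretic: once $M_\theta$ is completely contractive, apply a Stinespring/Wittstock-type factorization to the predual map, obtaining a $*$-representation of a suitable operator algebra together with two contractive Hilbert-space coefficients. Reading off these coefficients as sequences of multipliers of $\cH$ and evaluating against reproducing kernels should yield the required decomposition $\theta = \sum_n \varphi_n \psi_n$ with $(\varphi_n), (\psi_n) \in M^C_1(\cH)$.

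The principal obstacle I anticipate is the automatic complete contractivity step above. It is the heart of the theorem and exactly the reason the operator space structure on $\cH \odot \cH$ must be set up explicitly: without it, one is left with only a Banach space structure, and the purely function-theoretic approach of Richter--Wick has already demonstrated its limits for Drury--Arveson spaces of dimension $\ge 4$. The dilation extraction of the two column sequences in the second step, by contrast, should be comparatively routine once the correct operator algebra has been identified and the column multiplier condition has been matched with contractivity of the Stinespring coefficients.
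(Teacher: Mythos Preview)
Your overall architecture matches the paper's: (iii) $\Rightarrow$ (i) via the Cauchy--Schwarz estimate you wrote, (i) $\Rightarrow$ (ii) by dualizing to the Hankel side, and (ii) $\Rightarrow$ (iii) by dilation of $M_\theta^\dagger$. However, two of your steps have genuine gaps.

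The more serious one is in (ii) $\Rightarrow$ (iii), which you call ``comparatively routine.'' The Haagerup--Paulsen--Wittstock dilation applied to $M_\theta^\dagger$ only yields a factorization $M_\theta^\dagger(H_b) = W^*(H_b \otimes I)V$ with $V: \cH \to \cH \otimes \ell^2$ and $W: \ol{\cH} \to \ol{\cH} \otimes \ell^2$ arbitrary Hilbert-space contractions; there is no reason for these to be multiplication operators, so one cannot simply ``read off the coefficients as sequences of multipliers.'' The paper closes this gap in two further moves: first, the bimodule identity $M_\theta^\dagger(H_b T_\varphi) = M_\theta^\dagger(H_b) T_\varphi$ (Lemma~\ref{lem:mult_adjoint}(b)) forces a suitable compression of $V$ to intertwine the $\Mult(\cH)$-action up to a projection; second, the \emph{commutant lifting theorem} for complete Nevanlinna--Pick spaces is invoked to lift this compression to a genuine column multiplication operator $T_\Phi$, and similarly for $W$. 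This commutant lifting step is precisely where the CNP hypothesis enters the argument, and your proposal omits it entirely.

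For (i) $\Rightarrow$ (ii), you correctly flag this as the principal obstacle, but the mechanism you suggest---an analogue of the kernel-eigenvector argument giving automatic complete contractivity of $\Mult(\cH)$---does not transfer to $\Mult(\cH \odot \cH)$, where $M_\theta^\dagger$ does not act diagonally on any obvious spanning set. The paper instead uses the Jury--Martin factorization theorem: any tuple $(f_i)_{i=1}^n$ in the unit ball of $\cH^n$ factors as $f_i = \varphi_i F$ with $(\varphi_i) \in M_1^C(\cH)$ and $\|F\|\le 1$. This yields a matricial norming property for $B(\cH,\ol{\cH})$ (Lemma~\ref{lem:matricial_norming}), from which complete contractivity of $M_\theta^\dagger$ follows by a standard bimodule argument. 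Without identifying a tool at this level of concreteness, your plan for this step is not yet a proof sketch.
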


In particular, it follows that the norm of an element $\theta \in \Mult(\cH \odot \cH)$ is given by
\begin{equation*}
  \|\theta\|_{\Mult(\cH \odot \cH)}
  = \inf \Bigg\{
    \left\|
    \begin{bmatrix}
      \varphi_1 \\ \varphi_2 \\ \vdots 
    \end{bmatrix} \right\|
    \left\|
    \begin{bmatrix}
      \psi_1 \\ \psi_2 \\ \vdots 
  \end{bmatrix} \right\| : \theta = \sum_{n=1}^\infty \varphi_n \psi_n \Bigg\},
\end{equation*}
where the norms of the columns in the infimum are taken in $\Mult(\cH, \cH \otimes \ell^2)$. Moreover, the infimum is attained.

The proof of this theorem will be separated in several steps.
The implication (iii) $\Rightarrow$ (i) easily follows from the definition
of $\cH \odot \cH$ and already appears in \cite[Theorem 3.1]{AHM+18}.
We provide the short argument in Proposition \ref{prop:factorization_is_contractive}.
The proof of the implication (i) $\Rightarrow$ (ii)
uses a recent result of Jury and Martin \cite{JM18} and is presented in Proposition \ref{prop:mult_cc}.
The majority of the work occurs in the proof of the implication (ii) $\Rightarrow$ (iii),
which uses dilation theory and is done in Theorem \ref{thm:multipliers_factor}.

While not logically necessary, we also provide a direct proof of the implication (iii) $\Rightarrow$ (ii);
this is done in Proposition \ref{prop:factorization_multiplier}. This proof shows how the operator
space structure of $\cH \odot \cH$ enters the picture and motivates our approach to the harder
implication (ii) $\Rightarrow$ (iii).

In many instances, Theorem \ref{thm:mult_char} implies an affirmative answer to Question \ref{quest:mult_same}.
The key property is the following.
A normalized complete Nevanlinna--Pick space $\cH$ is said to satisfy the \emph{column-row property (with constant $\kappa$)} if whenever $(\varphi_n)_{n=1}^\infty$ is a sequence in $\Mult(\cH)$ with
\begin{equation*}
      \left\| \begin{bmatrix}
        \varphi_1 \\ \varphi_2 \\ \vdots
      \end{bmatrix} \right\|_{\Mult(\cH, \cH \otimes \ell^2)} \le 1,
\end{equation*}
then
\begin{equation*}
      \Big\| \begin{bmatrix}
        \varphi_1 & \varphi_2 & \cdots
      \end{bmatrix} \Big\|_{\Mult(\cH \otimes \ell^2, \cH)} \le \kappa.
\end{equation*}
The classical Hardy space $H^2(\bD)$ satisfies the column-row property with constant $1$,
as the norm of a row and of a column of multipliers are both given by the supremum norm
over $\bD$.
It is a result of Trent \cite{Trent04}
that the classical Dirichlet space satisfies the column-row property with constant at most $\sqrt{18}$.
The Drury--Arveson space $H^2_d$ with $d < \infty$ also satisfies the column-row property
with some finite constant, which possibly depends on $d$ \cite[Theorem 1.5]{AHM+18}.
More generally, it was shown in \cite{AHM+18a} that radially weighted Besov spaces on the unit ball in finite dimensions
satisfy the column-row property with a finite constant, which again possibly depends on the dimension and on the particular
space.
It is an open question whether every normalized complete Nevanlinna--Pick space satisfies
the column-row property with a finite constant (see \cite{Pascoe19} for some work on this question).
Recently, it has become clear that the column-row property is a very useful technical property
when studying weak product spaces, see for instance \cite{AHM+18}. A striking example
of this is the result of Jury and Martin \cite{JM18}, according to which every function
in $\cH \odot \cH$ factors as a product of precisely two functions in $\cH$, provided that $\cH$ satisfies
the column-row property.

Theorem \ref{thm:mult_char} combined with an argument from  \cite[Theorem 3.1]{AHM+18} implies a positive answer to Question \ref{quest:mult_same} in the
presence of the column-row property.

\begin{cor}
  \label{cor:column_row}
  Let $\cH$ be a normalized complete Nevanlinna--Pick space that satisfies the column row-property
  with constant $\kappa$. Then $\Mult(\cH) = \Mult(\cH \odot \cH)$ and
  \begin{equation*}
    \|\theta\|_{\Mult(\cH \odot \cH)} \le \|\theta\|_{\Mult(\cH)} \le \kappa \| \theta \|_{\Mult(\cH \odot \cH)}
  \end{equation*}
  for every $\theta \in \Mult(\cH)$.
\end{cor}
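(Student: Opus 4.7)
The strategy is to leverage Theorem \ref{thm:mult_char} to obtain a factorization of any $\theta \in \Mult(\cH \odot \cH)$, then use the column-row property to convert one of the column factors into a row, and finally recognize the resulting row-times-column product as the multiplication operator $M_\theta$ on $\cH$.

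The first inequality $\|\theta\|_{\Mult(\cH \odot \cH)} \le \|\theta\|_{\Mult(\cH)}$ is already recorded in the introduction (it is immediate from the factorization defining the weak product norm), so only the reverse inclusion $\Mult(\cH \odot \cH) \subset \Mult(\cH)$ and the companion inequality $\|\theta\|_{\Mult(\cH)} \le \kappa \|\theta\|_{\Mult(\cH \odot \cH)}$ require argument. By homogeneity, it suffices to treat $\theta \in \Mult(\cH \odot \cH)$ with $\|\theta\|_{\Mult(\cH \odot \cH)} \le 1$. Theorem \ref{thm:mult_char}, together with the fact (noted after its statement) that the infimum is attained, furnishes sequences $(\varphi_n)$ and $(\psi_n)$ in $M^C_1(\cH)$ satisfying $\theta = \sum_{n=1}^\infty \varphi_n \psi_n$ and
\[
\left\|\begin{bmatrix}\varphi_1 \\ \varphi_2 \\ \vdots\end{bmatrix}\right\|_{\Mult(\cH,\cH\otimes\ell^2)} \le 1,
\qquad
\left\|\begin{bmatrix}\psi_1 \\ \psi_2 \\ \vdots\end{bmatrix}\right\|_{\Mult(\cH,\cH\otimes\ell^2)} \le 1.
\]

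Now I apply the column-row property with constant $\kappa$ to the first sequence, yielding that the row multiplier $R=\begin{bmatrix}\varphi_1 & \varphi_2 & \cdots\end{bmatrix}:\cH\otimes\ell^2\to\cH$ has norm at most $\kappa$. Denoting by $C:\cH\to\cH\otimes\ell^2$ the column multiplier with entries $\psi_n$, for any $f\in\cH$ the vector $Cf=(\psi_n f)_n$ lies in $\cH\otimes\ell^2$, so $RCf=\sum_{n=1}^\infty \varphi_n \psi_n f$ converges in $\cH$. Since norm convergence in a reproducing kernel Hilbert space implies pointwise convergence, and since $\theta=\sum_n\varphi_n\psi_n$ pointwise on $X$, the sum equals $\theta f$ as a function. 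This shows simultaneously that $\theta f\in\cH$ (so $\theta\in\Mult(\cH)$) and that $M_\theta = RC$ as bounded operators on $\cH$, whence
\[
\|\theta\|_{\Mult(\cH)} = \|M_\theta\|_{B(\cH)} \le \|R\|\cdot\|C\| \le \kappa.
\]
Rescaling recovers the stated inequality for arbitrary $\theta$.

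There is no serious obstacle here, since Theorem \ref{thm:mult_char} has done all of the heavy lifting; the only point requiring mild care is the justification that $\sum_n \varphi_n\psi_n f$ converges to $\theta f$ in $\cH$-norm, which follows from boundedness of $R$ combined with the pointwise identification of $\theta$. This is exactly the mechanism by which the column-row property converts the weak product factorization into a multiplier bound on $\cH$, as anticipated in the discussion preceding \cite[Theorem 3.1]{AHM+18}.
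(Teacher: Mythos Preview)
Your proof is correct and follows essentially the same route as the paper's: obtain the factorization $\theta=\sum_n\varphi_n\psi_n$ with $(\varphi_n),(\psi_n)\in M^C_1(\cH)$ from Theorem~\ref{thm:mult_char}, convert the $(\varphi_n)$-column into a row via the column-row property, and bound $\|\theta\|_{\Mult(\cH)}$ by the product of the row and column norms. Your added justification that $RCf=\theta f$ via pointwise convergence is a reasonable elaboration of what the paper leaves implicit.
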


\begin{proof}
  It is elementary to check that every contractive multiplier of $\cH$ is also a contractive
  multiplier of $\cH \odot \cH$; alternatively, this also follows from the implication (iii) $\Rightarrow$ (i)
  of Theorem \ref{thm:mult_char}.
  
  Conversely, if $\theta$ is a contractive multiplier of $\cH \odot \cH$,
  then the implication (i) $\Rightarrow$ (iii) of Theorem \ref{thm:mult_char} shows that
  there are $(\varphi_n), (\psi_n) \in M^C_1(\cH)$ so that $\theta = \sum_{n=1}^\infty \varphi_n \psi_n$. In other words, we have
  \begin{equation*}
    \theta =
    \begin{bmatrix}
      \varphi_1 & \varphi_2 & \cdots
    \end{bmatrix}
    \begin{bmatrix}
      \psi_1 \\ \psi_2 \\ \vdots
    \end{bmatrix}.
  \end{equation*}
  The column is a contractive multiplier of $\cH$ as $(\psi_n) \in M^C_1(\cH)$, and the row has norm at most $\kappa$
  by the column-row property since $(\varphi_n) \in M^C_1(\cH)$. Hence $\theta \in \Mult(\cH)$ and
  $\|\theta\|_{\Mult(\cH)} \le \kappa$.
\end{proof}

Matrix-valued multipliers play an important role in the theory of complete Nevan\-linna--Pick spaces, and hence so does the operator space structure of $\Mult(\cH)$. For instance, this operator space structure 
encodes the difference between the Nevanlinna--Pick property and the complete Nevanlinna--Pick property.
In the final section of the paper, we show how to equip $\Mult(\cH \odot \cH)$ with an operator space structure,
i.e.\ we define norms for matrices of multipliers of $\cH \odot \cH$. We establish a version
of Theorem \ref{thm:mult_char} for matrices of multipliers (Theorem \ref{thm:weak_product_vector}) and then use this result
to compare the operator space structures of $\Mult(\cH)$ and of $\Mult(\cH \odot \cH)$.
In the case of the Drury--Arveson space and of the classical Dirichlet space,
we show that while the inclusion $\Mult(\cH) \hookrightarrow \Mult(\cH \odot \cH)$ is a Banach space isomorphism
(by Corollary \ref{cor:column_row}), it is not an isomorphism of operator spaces.

\section{Preliminaries}
\label{sec:prelim}

Let $\cH$ be a normalized complete Nevanlinna--Pick space of functions on a set $X$.
We assume throughout that $\cH$ is separable.

\subsection{Hankel operators}
\label{ss:Hankel}

Nehari's theorem identifies the dual space of $H^1(\bD)$ with the space of
symbols of bounded Hankel operators on $H^2(\bD)$ via the standard integral
pairing on the unit circle.
When studying multipliers of $\cH \odot \cH$, we will heavily use a generalization of
this fact, namely the duality between $\cH \odot \cH$ and the space $\Han(\cH)$ of symbols of Hankel operators
on $\cH$. We now recall the necessary basics from \cite[Section 2]{AHM+18}.

The conjugate Hilbert space of $\cH$ is
\begin{equation*}
  \ol{\cH} = \{ \ol{f}: f \in \cH \}.
\end{equation*}
This space can be \emph{linearly} and isometrically identified with the dual space $\cH^*$ of $\cH$
by means of the inner product of $\cH$. Notice that $\ol{\cH}$ is again a normalized complete Nevanlinna--Pick
space on $X$ whose reproducing kernel is the complex conjugate of that of $\cH$.
The map $f\mapsto \ol{f}$ yields an anti-unitary operator between $\cH$ and $\ol{\cH}$ which conjugates $\Mult(\cH)$ to $\Mult(\ol{\cH})$.

It was shown in \cite[Section 2]{AHM+18} that the dual space of $\cH \odot \cH$ can
be linearly and isometrically identified with a subspace $\HAN(\cH) \subset B(\cH, \ol{\cH})$;
operators in $\HAN(\cH)$ are called Hankel operators on $\cH$. Every operator $T \in \HAN(\cH)$
is uniquely determined by its symbol $T 1 \in \ol{\cH}$. Let
\begin{equation*}
  \Han(\cH) = \{ \ol{T 1} \in \cH: T \in \HAN(\cH) \}
\end{equation*}
denote the space of symbols of Hankel operators.
Given $b \in \Han(\cH)$, we write $H_b$ for the unique operator in $\HAN(\cH)$ that satisfies
$H_b 1 = \ol{b}$. Notice that the map $b \mapsto H_b$ is conjugate linear.
The operator $H_b$ is uniquely determined by the requirement that
\begin{equation}
  \label{eqn:Han_def}
  \langle H_b f, \ol{\varphi} \rangle_{\ol{\cH}} = \langle \varphi f,b \rangle_{\cH}
  \quad \text {for all } f \in \cH, \varphi \in \Mult(\cH).
\end{equation}
(Since $\cH$ is a normalized complete Nevanlinna--Pick space, the kernel functions
are contained in $\Mult(\cH)$ and hence $\Mult(\cH)$ is densely contained in $\cH$.)

\begin{rem}
  If $\cH$ satisfies the column-row property, then the space $\Han(\cH)$
  can be more concretely described as the space of all those $b \in \cH$
  for which the densely defined bilinear form on $\cH \times \cH$,
  \begin{equation*}
    (\varphi ,f) \mapsto \langle \varphi f,b \rangle \quad (\varphi \in \Mult(\cH), f \in \cH),
  \end{equation*}
  is bounded; see \cite[Theorem 2.6]{AHM+18}.
\end{rem}

The version of Nehari's theorem obtained in \cite[Theorem 2.5]{AHM+18} asserts that
there is a conjugate linear isometric isomorphism
$\Han(\cH) \cong (\cH \odot \cH)^*, b \mapsto L_b$, satisfying
\begin{equation}
  \label{eqn:weak_product_Han_dual}
  L_b( \varphi f) = \langle \varphi f,b \rangle \quad \text{ for all } b \in \Han(\cH), f \in \cH, \varphi \in \Mult(\cH).
\end{equation}
Here, the norm on $\Han(\cH)$ is given by $\|b\|_{\Han(\cH)} = \|H_b\|_{B(\cH,\ol{\cH})}$.
We write $[f,H_b] = L_b(f)$ for $f \in \cH \odot \cH$ and $b \in \Han(\cH)$.
Thus, combining \eqref{eqn:Han_def} and \eqref{eqn:weak_product_Han_dual}, we see that
the linear duality between $\HAN(\cH)$ and $\cH \odot \cH$ is given by
\begin{equation}
  \label{eqn:HAN_wp_dual}
  [f g, H_b] = \langle H_b f, \ol{g} \rangle_{\ol{\cH}} \quad (f,g \in \cH, b \in \Han(\cH)).
\end{equation}
This duality also endows $\HAN(\cH)$ with a weak-$*$ topology.
It follows from the construction in \cite[Section 2]{AHM+18}, or one checks directly,
that this weak-$*$ topology agrees with the one inherited from $B(\cH,\ol{\cH})$ as the dual of
the space $T(\ol{\cH},\cH)$ of trace class operators.

\begin{rem}
In this article, we find it convenient to distinguish between a Hankel operator and its symbol,
as the correspondence between them is \emph{conjugate} linear.
Ultimately, we will work with the operators directly and only use Equation \eqref{eqn:HAN_wp_dual},
which in principle could be understood without explicitly mentioning symbol functions.
\end{rem}

Given a bounded linear operator $A: \cH \odot \cH \to \cH \odot \cH$,
let $A^\dagger: \HAN(\cH) \to \HAN(\cH)$ be the Banach space adjoint
of $A$ modulo the linear isometric isomorphism $\HAN(\cH) \cong (\cH \odot \cH)^*$.
Equation \eqref{eqn:HAN_wp_dual} shows that the action of $A^\dagger$ is characterized by
the formula
\begin{equation}
  \label{eqn:dagger}
  \langle A^\dagger (H_b) f, \ol{g} \rangle_{\ol{\cH}} = [ A( f g ), H_b ]
  \quad (f,g \in \cH, b \in \Han(\cH)).
\end{equation}

It follows from a theorem of Hartman that $H^1(\bD)$ can be identified with the dual
of the space of all symbols of compact Hankel operators.
In a similar fashion, the weak product $\cH \odot \cH$ is also a dual space.
More precisely, let
\begin{equation*}
  \HAN_0(\cH) = \HAN(\cH) \cap K(\cH,\ol{\cH})
\end{equation*}
be the space  of all compact Hankel operators on $\cH$.
According to Theorems 2.1 and 2.5 of \cite{AHM+18}, the duality
between $\cH \odot \cH$ and $\HAN(\cH)$ also yields an isometric isomorphism
$\HAN_0(\cH)^* \cong \cH \odot \cH$.

If $k_z \in \cH$ denotes the kernel function associated with $z \in X$, then
boundedness of point evaluations on $\cH \odot \cH$ and the duality
between $\cH \odot \cH$ and $\Han(\cH)$ (see Equation \eqref{eqn:weak_product_Han_dual}) imply that $k_z \in \Han(\cH)$. Moreover,
Equation \eqref{eqn:Han_def} shows that $H_{k_z} f = f(z) \ol{k_z}$ for $f\in \cH$, so that $H_{k_z}$
is a rank-one operator and $H_{k_z} \in \HAN_0(\cH)$.
In particular, point evaluations on $\cH \odot \cH$ are continuous in the weak-$*$ topology
given by the duality $\cH \odot \cH = \HAN_0(\cH)^*$. On bounded subsets of $\cH \odot \cH$,
the weak-$*$ topology agrees with the topology of pointwise
convergence on $X$; see \cite[Corollary 2.2]{AHM+18} and the remarks preceding it.
It also follows from the Hahn--Banach theorem that $\HAN_0(\cH)$ is weak-$*$ dense in $\HAN(\cH)$.

\subsection{Multiplication operators and duality}

To distinguish multiplication operators on $\cH$ and on $\cH \odot \cH$, we will use the following
notation. Given $\theta \in \Mult(\cH \odot \cH)$, the corresponding multiplication operator
on $\cH \odot \cH$ is denoted by
\begin{equation*}
  M_\theta: \cH \odot \cH \to \cH \odot \cH, \quad f \mapsto \theta \cdot f.
\end{equation*}
Given $\varphi \in \Mult(\cH)$, we denote the associated multiplication operator on $\cH$ by
\begin{equation*}
  T_\varphi: \cH \to \cH, \quad f \mapsto \varphi \cdot f.
\end{equation*}
(If $\cH = H^2(\bD)$, then $T_\varphi$ is an analytic Toeplitz operator, which motivates our choice of notation.)

Since every multiplier of $\cH$ is also a multiplier of $\cH\odot \cH$, if $b\in \Han(\cH)$ and $\psi\in\Mult(\cH)$ then $L_b\circ M_\psi\in (\cH\odot \cH)^*$. This element in the dual is verified to correspond to $T_\psi^*b\in \Han(\cH)$. Moreover,
the defining equation of a Hankel operator, Equation \eqref{eqn:Han_def}, easily implies the
intertwining relation
\begin{equation}
  \label{eqn:Hankel_intertwining}
  H_b T_\psi = T_{\ol{\psi}}^* H_b = H_{ T^*_{\psi} b} \quad \text{ for all }
  \psi \in \Mult(\cH), b \in \Han(\cH);
\end{equation}
see also \cite[Lemma 2.3]{AHM+18}. In particular,
$\HAN(\cH)$ is a $\Mult(\ol{\cH})^*-\Mult(\cH)$-bimodule.
The following lemma shows that if $\theta \in \Mult(\cH \odot \cH)$, then
$M_\theta^\dagger$ respects this bimodule structure.

\begin{lem}
  \label{lem:mult_adjoint}
  Let $\theta \in \Mult(\cH \odot \cH)$, let $\varphi \in \Mult(\cH)$ and let $b \in \Han(\cH)$.
  Then:
  \begin{enumerate}[label=\normalfont{(\alph*)}]
    \item $M_\varphi^\dagger(H_b) = H_b T_\varphi = T_{\ol{\varphi}}^* H_b$.
    \item $M_\theta^\dagger( H_b T_\varphi) = M_\theta^\dagger(H_b) T_\varphi = M_\theta^\dagger( T_{\ol{\varphi}}^* H_b) = T_{\ol{\varphi}}^* M_\theta^\dagger(H_b)$.
    \item $M_\theta^\dagger(H_{k_z}) = \theta(z) H_{k_z}$ for all $z \in X$.
  \end{enumerate}
\end{lem}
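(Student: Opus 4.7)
The plan is to derive all three assertions directly from the defining relation \eqref{eqn:dagger}
\[
\langle A^\dagger(H_b) f, \ol{g}\rangle_{\ol{\cH}} = [A(fg), H_b] \qquad (f,g\in\cH,\ b\in\Han(\cH)),
\]
combined with the intertwining identity \eqref{eqn:Hankel_intertwining} and the commutativity of pointwise multiplication on $\cH \odot \cH$. No deep tool is required; the content is really to organize these pieces carefully.

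For (a), I would feed $A=M_\varphi$ into \eqref{eqn:dagger}. Then the right-hand side is $[(\varphi f)g, H_b]$, which by \eqref{eqn:HAN_wp_dual} equals $\langle H_b(\varphi f), \ol{g}\rangle_{\ol{\cH}} = \langle H_b T_\varphi f, \ol{g}\rangle_{\ol{\cH}}$. Since $f,g\in\cH$ are arbitrary, this forces $M_\varphi^\dagger(H_b)=H_b T_\varphi$, and the alternative expression $T_{\ol{\varphi}}^* H_b$ is then simply a restatement of \eqref{eqn:Hankel_intertwining}. In particular this already shows $H_b T_\varphi \in \HAN(\cH)$, so the expressions appearing in (b) make sense.

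For (b), the key observation is that $M_\theta$ and $M_\varphi$ commute as bounded operators on $\cH \odot \cH$, because pointwise multiplication is commutative. Taking the Banach adjoint of the identity $M_\theta M_\varphi = M_\varphi M_\theta$ gives $M_\theta^\dagger M_\varphi^\dagger = M_\varphi^\dagger M_\theta^\dagger$ on $\HAN(\cH)$. Plugging $M_\varphi^\dagger(H_b) = H_b T_\varphi = T_{\ol{\varphi}}^* H_b$ from (a) into both sides of this commutation relation yields all four equalities of (b) at once.

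For (c), I would again apply \eqref{eqn:dagger}, now with $A=M_\theta$ and $b=k_z$. The right-hand side $[\theta fg, H_{k_z}]$ is point evaluation at $z$, since the paper notes that $H_{k_z}$ is the rank-one operator $f\mapsto f(z)\ol{k_z}$ and, via \eqref{eqn:weak_product_Han_dual}, the functional $L_{k_z}$ is evaluation at $z$; hence this pairing equals $\theta(z)f(z)g(z)$. On the other hand, $\langle \theta(z)H_{k_z}f, \ol{g}\rangle_{\ol{\cH}} = \theta(z)f(z)\langle \ol{k_z},\ol{g}\rangle_{\ol{\cH}} = \theta(z)f(z)g(z)$ as well. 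Equality of the sesquilinear forms $\langle\cdot f, \ol{g}\rangle_{\ol{\cH}}$ over all $f,g$ then forces $M_\theta^\dagger(H_{k_z}) = \theta(z) H_{k_z}$. The only mild obstacle in the whole argument is bookkeeping: one must keep straight that $b\mapsto H_b$ is conjugate linear, so that the duality pairing $[\cdot,\cdot]$ is not confused with the inner product of $\cH$ when unwinding \eqref{eqn:dagger}.
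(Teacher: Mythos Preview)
Your proof is correct and follows essentially the same approach as the paper: part (a) is obtained by substituting $A=M_\varphi$ into \eqref{eqn:dagger} and invoking \eqref{eqn:Hankel_intertwining}, part (b) by dualizing the commutation relation $M_\theta M_\varphi = M_\varphi M_\theta$ and applying (a), and part (c) by testing \eqref{eqn:dagger} with $A=M_\theta$ and $b=k_z$ and using that $H_{k_z}$ implements point evaluation. The only cosmetic difference is that in (c) the paper writes $[\theta fg, H_{k_z}] = \theta(z)[fg, H_{k_z}] = \theta(z)\langle H_{k_z} f, \ol{g}\rangle$ directly, whereas you pass through the explicit value $\theta(z)f(z)g(z)$; these are the same computation.
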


\begin{proof}
  (a) For $f,g \in \cH$, we find using \eqref{eqn:HAN_wp_dual} and \eqref{eqn:dagger} that
  \begin{equation*}
    \langle M_\varphi^\dagger (H_b) f, \ol{g} \rangle_{\ol{\cH}}
    = [ \varphi f g, H_b]
    = \langle H_b T_\varphi f, \ol{g} \rangle_{\ol{\cH}},
  \end{equation*}
  from which the first half of (a) follows. The second half follows from \eqref{eqn:Hankel_intertwining}.

  (b)
  Dualizing the commutation relation $M_\varphi M_\theta = M_\theta M_\varphi$ and using (a), we
  see that
  \begin{equation*}
    M_\theta^\dagger( H_b T_\varphi) = (M_\theta^\dagger M_\varphi^\dagger) (H_b)
    = (M_\varphi^\dagger M_\theta^\dagger) (H_b) = M_\theta^\dagger (H_b) T_\varphi.
  \end{equation*}
  The remaining parts of (b) follow from this and from \eqref{eqn:Hankel_intertwining}.

  (c) For $z \in X$ we have that $[h,H_{k_z}]=h(z)$ for every $h\in \cH\odot \cH$. Using \eqref{eqn:HAN_wp_dual} and  \eqref{eqn:dagger}, we obtain
  for $f,g \in \cH$ the identity
  \begin{equation*}
    \langle M_\theta^\dagger (H_{k_z}) f, \ol{g} \rangle_{\overline{\mathcal{H}}}
    = [\theta f g, H_{k_z}]
    = \theta(z) [f g, H_{k_z}]
    = \theta(z) \langle H_{k_z} f, \ol{g} \rangle_{\overline{\mathcal{H}}},
  \end{equation*}
  from which (c) follows.
\end{proof}

\subsection{\texorpdfstring{$\cH \odot \cH$}{H dot H} as an operator space}
\label{ss:os}

As mentioned in the introduction, a key device in our analysis of multipliers of $\cH \odot \cH$
is the observation that the weak product carries a natural operator space structure. We therefore
recall the necessary background from the theory of operator spaces. For precise definitions and more information,
the reader is referred to the books \cite{BL04,ER00,Paulsen02,Pisier03}.

Given a vector space $V$, let $M_n(V)$ be the vector space of all $n \times n$ matrices
with entries in $V$.
An (abstract) operator space is a normed space $V$, together with a norm on each $M_n(V)$ satisfying certain axioms. We will not require the precise form of the axioms and thus
simply refer to \cite[Section 2.1]{ER00}.
Perhaps the most important example of an operator space is the space $B(\cH,\cK)$,
where $\cH$ and $\cK$ are Hilbert spaces. In this case, we can identify $M_n(B(\cH,\cK))$
with $B(\cH^n,\cK^n)$, which equips $M_n(B(\cH,\cK))$ with a norm. In the same
vein, any subspace of $B(\cH,\cK)$ becomes an operator space in this way.

If $V,W$ are operator spaces, each linear map $A: V \to W$ induces for each $n\in \bN$ a linear map
$A^{(n)}: M_n(V) \to M_n(W)$ defined by applying $A$ entrywise. The map $A$ is said to be
completely bounded if
\begin{equation*}
  \|A\|_{c b} = \sup_{n \in \bN} \| A^{(n)} \| < \infty,
\end{equation*}
and completely contractive if $\|A\|_{cb} \le 1$.
Similarly,
$A$ is said to be a complete isometry if each $A^{(n)}$ is an isometry.
We write $\CB(V,W)$ for the space of all completely bounded linear maps from $V$ to $W$,
endowed with the cb norm.
It is a well-known phenomenon in operator space
theory that completely bounded maps exhibit much better behavior than maps that are merely bounded.

If $V$ is an abstract operator space, then its dual space $V^*$ carries a natural
operator space structure, called the dual operator space structure.
It is defined via the identification $M_n(V^*) = \CB(V,M_n)$.
The dual operator space structure has the property
that if $A: V \to W$ is a completely bounded map between operator spaces, then the adjoint
$A^*: W^* \to V^*$ is completely bounded with $\|A^*\|_{c b} = \|A\|_{cb}$;
see \cite[Section 3.2]{ER00}.

We now apply these abstract considerations to our setting of weak products. Since
$\HAN_0(\cH) \subset \HAN(\cH) \subset B(\cH,\ol{\cH})$, the spaces
$\HAN_0(\cH)$ and $\HAN(\cH)$ carry a natural operator space structure.
Since $\cH \odot \cH$ is isometrically isomorphic
to $\HAN_0(\cH)^*$, we may endow $\cH \odot \cH$ with the corresponding dual operator space structure.
Taking duals again, the resulting dual
operator space structure on $(\cH \odot \cH)^* \cong \HAN(\cH)$ agrees with the
operator space structure of $\HAN(\cH)$ inherited from $B(\cH,\ol{\cH})$,
as the identification of $K(\cH,\ol{\cH})^{**}$ with $B(\cH,\ol{\cH})$ is a complete isometry;
see Theorem 1.4.11 in \cite{BL04} and the discussion preceding it.
In particular, it follows that $\cH \odot \cH$ is endowed with the unique operator
space structure that makes $\HAN(\cH)$ the operator space dual of $\cH \odot \cH$ with
respect to the given duality; this is also known as the predual operator space structure,
see \cite[Section 3]{LeMerdy95} for further discussion.

In particular, we see that a linear map $A: \cH \odot \cH \to \cH \odot \cH$
is completely contractive if and only if its adjoint $A^\dagger: \HAN(\cH) \to \HAN(\cH)$
is completely contractive. We will frequently use this fact. Indeed, for our purposes
it will be more convenient to study properties of $A$ through $A^\dagger$, because
the latter acts on a concrete space of operators as opposed to the space $\cH \odot \cH$,
whose operator space structure is less explicit.

We will only use the description of the operator space structure on $\cH \odot \cH$
in terms of the duality given above.
Nevertheless, we will provide a concrete description of the norm on $M_n(\cH \odot \cH)$
in Lemma \ref{lem:M_n_weak_product}.

\section{Proof of the main result}
\label{sec:proof}

We continue to assume throughout that $\cH$ is a normalized complete Nevanlinna--Pick space of functions on $X$.

\subsection{Factorization implies complete contractivity}

We first show that (iii) $\Rightarrow$ (i) in Theorem \ref{thm:mult_char},
that is, that every function that can be factored using a pair of elements in $M_1^C(\cH)$ is a contractive
multiplier of $\cH \odot \cH$. As mentioned in the introduction, this result is
known \cite[Theorem 3.1]{AHM+18}, but we include the short argument for the convenience of the reader.

\begin{prop}
  \label{prop:factorization_is_contractive}
  Let $(\varphi_n), (\psi_n) \in M^C_1(\cH)$ and define $\theta = \sum_{n=1}^\infty \varphi_n \psi_n$.
  Then $\theta$ is a contractive
  multiplier of $\cH \odot \cH$.
\end{prop}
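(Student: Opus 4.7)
The plan is to take a representation $h = \sum_{i=1}^\infty f_i g_i$ with $\sum_i \|f_i\|\,\|g_i\| < \infty$, then formally compute
\[
\theta h = \sum_{i=1}^\infty \Bigl(\sum_{n=1}^\infty \varphi_n \psi_n\Bigr) f_i g_i = \sum_{i,n} (\varphi_n f_i)(\psi_n g_i),
\]
and show that this double-indexed sum converges absolutely in $\cH \odot \cH$ with the right norm bound. If we can control $\sum_{i,n} \|\varphi_n f_i\|\,\|\psi_n g_i\|$ by $\sum_i \|f_i\|\,\|g_i\|$, then the right-hand side is a legitimate weak-product representation of (a function agreeing pointwise with) $\theta h$, and taking the infimum over representations of $h$ yields the result.

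The estimation is a straightforward two-step Cauchy--Schwarz argument combined with the column-contractivity hypothesis. For each fixed $i$, Cauchy--Schwarz in the index $n$ gives
\[
\sum_{n=1}^\infty \|\varphi_n f_i\|\,\|\psi_n g_i\|
\le \Bigl(\sum_{n=1}^\infty \|\varphi_n f_i\|^2\Bigr)^{1/2}\Bigl(\sum_{n=1}^\infty \|\psi_n g_i\|^2\Bigr)^{1/2}.
\]
Since $(\varphi_n) \in M^C_1(\cH)$, the column operator $f \mapsto (\varphi_n f)_n$ from $\cH$ to $\cH \otimes \ell^2$ is a contraction, so $\sum_n \|\varphi_n f_i\|^2 \le \|f_i\|^2$; the analogous bound holds for $(\psi_n)$. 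Hence $\sum_n \|\varphi_n f_i\|\,\|\psi_n g_i\| \le \|f_i\|\,\|g_i\|$, and summing over $i$ yields $\sum_{i,n} \|\varphi_n f_i\|\,\|\psi_n g_i\| \le \sum_i \|f_i\|\,\|g_i\|$.

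It remains to verify that the function $\theta$ makes sense and that the double sum actually represents $\theta h$. The pointwise definition of $\theta$ is unambiguous: for each $z \in X$, applying the same column-contraction bound to the evaluation functional $k_z$ shows $\sum_n |\varphi_n(z)|^2 \le \|k_z\|^2 / \|k_z\|^2 = 1$ (after normalizing), so $\sum_n \varphi_n(z)\psi_n(z)$ converges absolutely. Since point evaluation is continuous on $\cH \odot \cH$ and the rearranged sum $\sum_{i,n}(\varphi_n f_i)(\psi_n g_i)$ converges absolutely in the norm of $\cH \odot \cH$, its sum agrees pointwise with $\theta \cdot h$; in particular $\theta h \in \cH \odot \cH$ and the norm estimate follows by taking the infimum over representations of $h$.

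I expect no serious obstacle here: the whole argument is essentially two applications of Cauchy--Schwarz plus the defining inequality for $M^C_1(\cH)$. The only mild bookkeeping is making sure the double sum is legitimately rearranged and that $\theta$ is well-defined as a function, which is immediate from the absolute convergence established above.
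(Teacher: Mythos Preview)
Your proposal is correct and follows essentially the same argument as the paper: expand $\theta h$ as the double sum $\sum_{i,n}(\varphi_n f_i)(\psi_n g_i)$, apply Cauchy--Schwarz in $n$ together with the column-contractivity bound $\sum_n \|\varphi_n f\|^2 \le \|f\|^2$ to obtain $\sum_{i,n}\|\varphi_n f_i\|\,\|\psi_n g_i\| \le \sum_i \|f_i\|\,\|g_i\|$, and take the infimum over representations of $h$. Your additional bookkeeping on the pointwise well-definedness of $\theta$ and the rearrangement of the double sum is sound (the paper handles this in one sentence via continuity of point evaluations and Cauchy--Schwarz); the slightly awkward ``after normalizing'' remark can be cleaned up by noting that $\sum_n M_{\varphi_n} M_{\varphi_n}^* \le I$ applied to $k_z$ gives $\sum_n |\varphi_n(z)|^2 \le 1$ directly.
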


\begin{proof}
  Let $h = \sum_{n=1}^\infty f_n g_n \in \cH \odot \cH$ with $\sum_{n=1}^\infty \|f_n\| \|g_n\| < \infty$.
  By continuity of point evaluations on $\cH$ and by the Cauchy--Schwarz inequality, the sums defining $h$ and $\theta$ converge pointwise absolutely on $X$,
  hence
  \begin{equation*}
    \theta h = \sum_{k,n=1}^\infty (\varphi_k f_n) (\psi_k g_n).
  \end{equation*}
  Since $(\varphi_n)_{n=1}^\infty$ and $(\psi_n)_{n=1}^\infty \in M_1^C(\cH)$, we find that
  \begin{equation*}
  \sum_{n,k=1}^\infty \| \varphi_k f_n \| \|\psi_k g_n\| \leq \sum_{n=1}^\infty \left( \sum_{k=1}^\infty\|\varphi_kf_n\|^2\right)^{\frac{1}{2}} \left(\sum_{k=1}^\infty\|\psi_kg_n\|^2 \right)^{\frac{1}{2}}\\
    \le \sum_{n=1}^\infty \|f_n \| \|g_n\|,
  \end{equation*}
  so taking the infimum over all representations $h = \sum_{n=1}^\infty f_n g_n$, it follows that
  $\|\theta h\|_{\cH \odot \cH} \le \|h\|_{\cH \odot \cH}$, so that $\theta$ is a contractive multiplier
  of $\cH \odot \cH$.
\end{proof}

While not logically necessary, we improve the preceding result by showing
that functions that factor as above are actually \emph{completely} contractive multipliers
of $\cH \odot \cH$;  this is the implication (iii) $\Rightarrow$ (ii) of Theorem
\ref{thm:mult_char}.
We provide this proof as it shows how the operator space structure of $\cH \odot \cH$
and the duality between $\cH \odot \cH$ and $\HAN(\cH)$ enter the picture, and it
foreshadows
the dilation theoretic proof of the reverse
implication (ii) $\Rightarrow$ (iii) of Theorem \ref{thm:mult_char}.

\begin{prop}
  \label{prop:factorization_multiplier}
  Let $(\varphi_n), (\psi_n) \in M^C_1(\cH)$ and define $\theta = \sum_{n=1}^\infty \varphi_n \psi_n$.
  Then $\theta$ is a completely contractive
  multiplier of $\cH \odot \cH$.
\end{prop}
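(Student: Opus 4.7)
The plan is to exploit duality. By the operator space setup described in Section \ref{ss:os}, $M_\theta$ is completely contractive on $\cH \odot \cH$ if and only if its dual $M_\theta^\dagger : \HAN(\cH) \to \HAN(\cH)$ is completely contractive. Since $\HAN(\cH)$ inherits its operator space structure from the concrete realization inside $B(\cH, \ol{\cH})$, this reformulation is much more tractable, and I will work entirely with $M_\theta^\dagger$.

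The heart of the argument is to derive the factorization
\[
  M_\theta^\dagger(H) \;=\; \tilde C^{\,*}\,(H \otimes I_{\ell^2})\, C \qquad (H \in \HAN(\cH)),
\]
where $C : \cH \to \cH \otimes \ell^2$ is the column operator with entries $T_{\psi_n}$ and $\tilde C : \ol{\cH} \to \ol{\cH} \otimes \ell^2$ is the column operator with entries $T_{\ol{\varphi_n}}$. The hypothesis $(\psi_n) \in M^C_1(\cH)$ makes $C$ contractive. Likewise $(\varphi_n) \in M^C_1(\cH)$ makes $\tilde C$ contractive, because the antiunitary $\cH \to \ol{\cH}$, $f \mapsto \ol f$, conjugates $T_{\varphi_n}$ to $T_{\ol{\varphi_n}}$ (as recalled in Section \ref{sec:prelim}) and therefore preserves the relevant column norm.

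To establish the factorization, I would test both sides against vectors $f \in \cH$ and $\ol g \in \ol{\cH}$. On the one hand, \eqref{eqn:dagger} gives $\langle M_\theta^\dagger(H) f, \ol g\rangle = [\theta fg, H]$. On the other hand, unfolding the right-hand side yields $\sum_n \langle H T_{\psi_n} f, T_{\ol{\varphi_n}}\ol g\rangle = \sum_n [\psi_n f \cdot \varphi_n g, H]$. The Cauchy--Schwarz estimate $\sum_n \|\psi_n f\|\,\|\varphi_n g\| \le \|f\|\,\|g\|$ already used in the proof of Proposition \ref{prop:factorization_is_contractive} shows that $\sum_n (\psi_n f)(\varphi_n g)$ converges absolutely in $\cH \odot \cH$; comparing pointwise values identifies this sum with $\theta fg$, and continuity of the pairing $[\,\cdot\,, H]$ closes the loop. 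Alternatively, one can verify the identity operator-theoretically by combining Lemma \ref{lem:mult_adjoint}(a)--(b) to obtain $M_{\psi_n}^\dagger M_{\varphi_n}^\dagger(H) = T_{\ol{\varphi_n}}^{\,*} H T_{\psi_n}$, and summing.

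With the factorization in hand, the conclusion is standard operator space machinery: the ampliation map $B(\cH, \ol{\cH}) \to B(\cH \otimes \ell^2, \ol{\cH} \otimes \ell^2)$, $T \mapsto T \otimes I_{\ell^2}$, is a complete isometry, and pre- and post-multiplication by the fixed contractions $C$ and $\tilde C^{\,*}$ are complete contractions on the appropriate spaces of operators. Composing, $M_\theta^\dagger$ is completely contractive, hence so is $M_\theta$. I expect the main obstacle to be a clean justification of the factorization identity---in particular, keeping track of the conjugate-linear bookkeeping between $\cH$, $\ol{\cH}$, and the column hypotheses, and handling the convergence of the series defining $M_\theta^\dagger(H)$; everything else is either routine or directly supplied by results already established in the paper.
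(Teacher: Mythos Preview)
Your approach is essentially the same as the paper's: both show that $M_\theta^\dagger$ factors as (adjoint of a contractive column) $\times$ (ampliation $H_b \mapsto H_b \otimes I$) $\times$ (contractive column), from which complete contractivity is immediate. The only difference is in handling the infinite sum: the paper first truncates to $\theta_N = \sum_{n=1}^N \varphi_n \psi_n$ (so that $\theta_N \in \Mult(\cH)$ and Lemma~\ref{lem:mult_adjoint}(a) applies on the nose), obtains the factorization for $\theta_N$, and then passes to the limit via pointwise convergence and the weak-$*$ topology on $\cH \odot \cH$; you instead work directly with the infinite columns and justify the factorization identity by showing that $\sum_n (\psi_n f)(\varphi_n g)$ converges absolutely in $\cH \odot \cH$ to $\theta f g$. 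Both routes are correct and of comparable length; the paper's truncation avoids having to track convergence inside the pairing, while your version avoids the separate approximation step.
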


\begin{proof}
  Observe that it suffices to show that for each $N \in \bN$, the
  function $\theta_N = \sum_{n=1}^N \varphi_n \psi_n$ is a completely contractive multiplier
  of $\cH \odot \cH$. Indeed, $\theta_N$ converges pointwise to $\theta$. Hence,
  assuming that each $\theta_N$ is a completely contractive multiplier of $\cH \odot \cH$,
  we see that for all $f \in \cH \odot \cH$, the sequence $(\theta_N f)$ converges to $\theta f$ in the weak-$*$ topology
  of $\cH \odot \cH$. Thus, $\theta$ is completely contractive if each $\theta_N$ is.

  Therefore, we may assume that $\theta = \sum_{n=1}^N \varphi_n \psi_n$ for some $N \in \bN$. In particular, $\theta\in \Mult(\cH)$. We will show that, equivalently, the adjoint map $M_\theta^\dagger: \HAN(\cH) \to \HAN(\cH)$ is completely contractive.
  To this end, we apply part (a) of Lemma \ref{lem:mult_adjoint} to conclude that
  \begin{equation*}
    M_\theta^\dagger(H_b) = H_b T_\theta =
    \begin{bmatrix}
      T_{\ol{\psi_1}}^* & \cdots & T_{\ol{\psi_N}}^*
    \end{bmatrix} \big( H_b \oplus \cdots \oplus H_b \big)
    \begin{bmatrix}
      T_{\varphi_1} \\ \vdots \\ T_{\varphi_N}
    \end{bmatrix}
  \end{equation*}
  for every $b \in \Han(\cH)$.
   This formula implies that $M_\theta^\dagger$ is completely contractive
   once we know that the row and the column are contractive,
   which in turn follows from the assumption $(\varphi_n),(\psi_n) \in M^C_1(\cH)$
  (see also the remarks about the conjugate Hilbert space in Subsection \ref{ss:Hankel}).
\end{proof}

\subsection{Contractive multipliers are completely contractive}

The goal of this subsection is to show that
every contractive multiplier of $\cH \odot \cH$ is completely contractive,
that is, we prove the implication (i) $\Rightarrow$ (ii) of Theorem \ref{thm:mult_char}.

The key tool is the following lemma, which uses a recent result
of Jury and Martin \cite{JM18}.
For notational convenience, we regard finite sequences of multipliers
as infinite sequences that are eventually zero.

\begin{lem}
  \label{lem:matricial_norming}
  Let $[A_{i j}] \in  M_n(B(\cH,\ol{\cH}))$. Then
  \begin{equation*}
    \| [A_{i j}] \| = \sup \Big\{ \Big\| \sum_{i,j=1}^n T_{\ol{\psi_i}}^* A_{i j} T_{\varphi_j} \Big\|:
    (\varphi_i)_{i=1}^n, (\psi_i)_{i=1}^n \in M^C_1(\cH) \Big\}.
  \end{equation*}
\end{lem}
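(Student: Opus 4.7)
The plan is to recognize the double sum $\sum_{i,j=1}^n T_{\ol{\psi_i}}^* A_{i j} T_{\varphi_j}$ as a sandwiched product, and then to compare operator norms. Write
\[
  C = \begin{bmatrix} T_{\varphi_1} \\ \vdots \\ T_{\varphi_n} \end{bmatrix} : \cH \to \cH^n,
  \qquad
  R = \begin{bmatrix} T_{\ol{\psi_1}} \\ \vdots \\ T_{\ol{\psi_n}} \end{bmatrix} : \ol{\cH} \to \ol{\cH}^n.
\]
A direct computation shows that $\sum_{i,j=1}^n T_{\ol{\psi_i}}^* A_{i j} T_{\varphi_j} = R^* [A_{i j}] C$ as operators $\cH \to \ol{\cH}$. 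The condition $(\varphi_j)_{j=1}^n \in M^C_1(\cH)$ says precisely that $\|C\| \le 1$. Using the anti-unitary $\cH \to \ol{\cH}$ from Subsection \ref{ss:Hankel}, which conjugates $T_{\psi_i}$ to $T_{\ol{\psi_i}}$, the hypothesis $(\psi_i)_{i=1}^n \in M^C_1(\cH)$ is equivalent to $\|R\| \le 1$, and hence $\|R^*\| \le 1$.

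With this setup, the inequality $\le$ is immediate: for any choice of $(\varphi_i), (\psi_i) \in M^C_1(\cH)$, we have
\[
  \Big\| \sum_{i,j=1}^n T_{\ol{\psi_i}}^* A_{i j} T_{\varphi_j} \Big\| \;=\; \|R^* [A_{i j}] C\| \;\le\; \|R^*\|\, \|[A_{i j}]\|\, \|C\| \;\le\; \|[A_{i j}]\|.
\]

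For the reverse inequality, the plan is to realize the norm of $[A_{ij}] : \cH^n \to \ol{\cH}^n$ by testing against unit vectors and then to decompose those vectors using the column factorization result of Jury and Martin \cite{JM18}. Given $\varepsilon > 0$, choose unit vectors $(f_j)_{j=1}^n \in \cH^n$ and $(\ol{g_i})_{i=1}^n \in \ol{\cH}^n$ with
\[
  \|[A_{i j}]\| - \varepsilon \;\le\; \Big| \sum_{i,j=1}^n \langle A_{i j} f_j, \ol{g_i} \rangle_{\ol{\cH}} \Big|.
\]
The result of Jury--Martin provides, for each column $(f_j)_{j=1}^n \in \cH^n$ with $\sum_j \|f_j\|^2 \le 1$, a factorization $f_j = \varphi_j g$ with $(\varphi_j)_{j=1}^n \in M^C_1(\cH)$ and $g \in \cH$ of norm at most $1$; apply this to both $(f_j)$ and $(g_i)$ to produce $(\varphi_j), (\psi_i) \in M^C_1(\cH)$ and unit vectors $g, h \in \cH$ with $f_j = \varphi_j g$ and $g_i = \psi_i h$. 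Since conjugation takes $\psi_i h$ to $T_{\ol{\psi_i}} \ol{h}$ in $\ol{\cH}$, we obtain
\[
  \sum_{i,j=1}^n \langle A_{i j} f_j, \ol{g_i} \rangle \;=\; \sum_{i,j=1}^n \langle A_{i j} T_{\varphi_j} g, T_{\ol{\psi_i}} \ol{h} \rangle \;=\; \Big\langle \Big(\sum_{i,j=1}^n T_{\ol{\psi_i}}^* A_{i j} T_{\varphi_j}\Big) g, \ol{h} \Big\rangle,
\]
which is bounded in modulus by $\|\sum_{i,j} T_{\ol{\psi_i}}^* A_{ij} T_{\varphi_j}\|$. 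Letting $\varepsilon \to 0$ gives $\|[A_{ij}]\|$ bounded above by the supremum on the right-hand side, completing the proof.

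The principal obstacle is the availability of the Jury--Martin column factorization for an arbitrary normalized complete Nevanlinna--Pick space, with no column-row hypothesis. This is exactly the point of invoking \cite{JM18}: it is what allows one to convert generic test vectors in $\cH^n$ into multiplier-column data in $M^C_1(\cH)$ without any loss in norm, thereby passing from bounded bilinear estimates to the desired triple-product formula.
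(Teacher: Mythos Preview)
Your proof is correct and follows essentially the same approach as the paper: both directions are obtained by writing $\sum_{i,j} T_{\ol{\psi_i}}^* A_{i j} T_{\varphi_j} = R^*[A_{ij}]C$ for contractive columns $C$ and $R$, and the reverse inequality is proved by applying the Jury--Martin factorization to the test vectors $(f_j)$ and $(g_i)$. The only cosmetic issues are a notational clash (you use $g$ both for the single factor vector and in the $g_i$'s) and that Jury--Martin yields factors of norm at most $1$ rather than exactly $1$, but neither affects the argument.
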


\begin{proof}
  If $(\varphi_i),(\psi_i) \in M_1^C(\cH)$, then
  \begin{equation*}
    \Bigl\| \sum_{i,j=1}^n T_{\ol{\psi_i}}^* A_{i j} T_{\varphi_j} \Big\| =
    \Bigg\|
    \begin{bmatrix}
      T_{\ol{\psi_1}}^* & \cdots & T_{\ol{\psi_n}}^*
    \end{bmatrix}
    [A_{i j}]
    \begin{bmatrix}
      T_{\varphi_1} \\ \vdots \\ T_{\varphi_n}
    \end{bmatrix} \Bigg\| \le \big\| [A_{i j}] \big\|,
  \end{equation*}
  as the row and the column are contractions, hence the inequality ``$\ge$'' holds
  in the statement of the lemma.

  To prove the reverse inequality, it suffices to show that for every pair of sequences
  $(f_i)_{i=1}^n, (g_i)_{i=1}^n$ of elements of $\cH$ with $\sum_{i=1}^n \|f_i\|^2 = \sum_{j=1}^n \|g_i\|^2 = 1$,
  there exist $(\varphi_i),(\psi_i) \in M^C_1(\cH)$ so that
  \begin{equation*}
    \Big| \sum_{i,j=1}^n \langle A_{i j} f_j, \ol{g_i} \rangle_{\ol{\cH}} \Big|
    \le 
    \Big\| \sum_{i,j=1}^n T_{\ol{\psi_i}}^* A_{i j} T_{\varphi_j} \Big\|.
  \end{equation*}
  To this end, we apply Theorem 1.1 of \cite{JM18}, which yields $(\varphi_i),(\psi_i) \in M^C_1(\cH)$ and
  $F,G \in \cH$ with $\|F\| \le 1, \|G\| \le 1$ such that $f_i = \varphi_i F, g_i = \psi_i G$ for all $i$.
  Then
  \begin{equation*}
    \Big| \sum_{i,j=1}^n \langle A_{i j} f_j, \ol{g_i} \rangle_{\ol{\cH}} \Big|
    =\Big| \sum_{i,j=1}^n \langle A_{i j} T_{\varphi_j} F, T_{\ol{\psi_i}} \ol{G} \rangle_{\ol{\cH}} \Big|
    \le \Big\| \sum_{i,j=1}^n T_{\ol{\psi_i}}^* A_{i j} T_{\varphi_j} \Big\|,
  \end{equation*}
  as desired.
\end{proof}

\begin{rem}
  In the language of operator bimodules, Lemma \ref{lem:matricial_norming} says
  that the pair $(\Mult(\ol{\cH})^*,\Mult(\cH))$ is \emph{matricially norming} for $B(\cH,\ol{\cH})$,
  and in particular for $\HAN(\cH)$.
  This property is most commonly studied for $C^*$-bimodules, see for example \cite[Section 8]{Paulsen02}.
\end{rem}

Given the matricial norming property of Lemma \ref{lem:matricial_norming},
it is now routine to finish the proof of the implication (i) $\Rightarrow$ (ii) of Theorem
\ref{thm:mult_char}; cf.\ \cite[Proposition 8.6]{Paulsen02}.

\begin{prop}
  \label{prop:mult_cc}
  Every contractive multiplier of $\cH \odot \cH$ is completely contractive.
\end{prop}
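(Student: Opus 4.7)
The plan is to prove complete contractivity by passing to the dual. By the duality established in Subsection \ref{ss:os}, a linear map $M_\theta: \cH \odot \cH \to \cH \odot \cH$ is completely contractive if and only if its Banach adjoint $M_\theta^\dagger: \HAN(\cH) \to \HAN(\cH)$ is completely contractive. So it suffices to show that for every $n$ and every matrix $[H_{b_{ij}}] \in M_n(\HAN(\cH))$ we have $\bigl\|[M_\theta^\dagger(H_{b_{ij}})]\bigr\| \le \bigl\|[H_{b_{ij}}]\bigr\|$.

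To estimate the matricial norm on the left, I would apply the matricial norming property of Lemma \ref{lem:matricial_norming}: this norm equals
\[
  \sup \Bigl\{ \Bigl\| \sum_{i,j=1}^n T_{\ol{\psi_i}}^* M_\theta^\dagger(H_{b_{ij}}) T_{\varphi_j} \Bigr\| : (\varphi_i), (\psi_i) \in M^C_1(\cH) \Bigr\}.
\]
The key move is now to pull $M_\theta^\dagger$ out of each summand. Using Lemma \ref{lem:mult_adjoint}(b) twice (first on the left factor, then on the right) together with linearity of $M_\theta^\dagger$, each inner quantity rewrites as
\[
  \sum_{i,j=1}^n T_{\ol{\psi_i}}^* M_\theta^\dagger(H_{b_{ij}}) T_{\varphi_j} = M_\theta^\dagger\Bigl( \sum_{i,j=1}^n T_{\ol{\psi_i}}^* H_{b_{ij}} T_{\varphi_j} \Bigr),
\]
where the argument on the right lies in $\HAN(\cH)$ thanks to the intertwining relations \eqref{eqn:Hankel_intertwining}.

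Since $M_\theta$ is a contraction on $\cH \odot \cH$ by hypothesis, its Banach adjoint $M_\theta^\dagger$ is also a contraction. Combining this with the easy ``$\ge$'' direction of Lemma \ref{lem:matricial_norming} applied to the matrix $[H_{b_{ij}}]$, each supremand is bounded by
\[
  \Bigl\| \sum_{i,j=1}^n T_{\ol{\psi_i}}^* H_{b_{ij}} T_{\varphi_j} \Bigr\| \le \bigl\|[H_{b_{ij}}]\bigr\|.
\]
Taking the supremum over $(\varphi_i),(\psi_i) \in M^C_1(\cH)$ yields the desired bound on $\bigl\|[M_\theta^\dagger(H_{b_{ij}})]\bigr\|$, finishing the proof.

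There is no real obstacle here; the substance was in Lemma \ref{lem:matricial_norming} (which absorbed the Jury--Martin factorization), and in the bimodule identities of Lemma \ref{lem:mult_adjoint}(b). The present proposition is the standard bookkeeping one does whenever a ``matricial norming'' property is in hand, in the spirit of the classical $C^*$-bimodule arguments referenced in \cite[Proposition 8.6]{Paulsen02}.
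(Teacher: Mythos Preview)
Your proof is correct and essentially identical to the paper's own argument: both pass to the adjoint $M_\theta^\dagger$, apply Lemma \ref{lem:matricial_norming} to express the $M_n$-norm as a supremum, use Lemma \ref{lem:mult_adjoint}(b) to pull $M_\theta^\dagger$ outside the sum, and then invoke contractivity of $M_\theta^\dagger$ together with the easy direction of Lemma \ref{lem:matricial_norming}. Your explicit remark that $\sum_{i,j} T_{\ol{\psi_i}}^* H_{b_{ij}} T_{\varphi_j} \in \HAN(\cH)$ via \eqref{eqn:Hankel_intertwining} is a useful clarification that the paper leaves implicit.
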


\begin{proof}
  Let $\theta \in \Mult(\cH \odot \cH)$ be a contractive multiplier.
  By duality, it suffices to show that the contractive map $M_\theta^\dagger: \HAN(\cH) \to \HAN(\cH)$
  is completely contractive.
  To this end, we use Lemma \ref{lem:matricial_norming} and part (b)
  of Lemma \ref{lem:mult_adjoint} to see that
  for $[H_{i j}] \in M_n(\HAN(\cH))$,
  \begin{align*}
    \| [ M_\theta^\dagger (H_{i j}) ] \|
    = \sup \Big\| \sum_{i,j=1}^n T_{\ol{\psi_i}}^* M_\theta^\dagger (H_{i j})  T_{\varphi_j} \Big\|
    &= \sup \Big\| M_\theta^\dagger \Big(\sum_{i,j=1}^n T_{\ol{\psi_i}}^* H_{i j} T_{\varphi_j} \Big) \Big\| \\
    &\le \sup \Big\| \sum_{i,j=1}^n T_{\ol{\psi_i}}^* H_{i j} T_{\varphi_j} \Big\| = \|[H_{i j}]\|,
  \end{align*}
  where all suprema are taken over $(\varphi_i), (\psi_i) \in M^C_1(\cH)$.
\end{proof}

\subsection{Completely contractive multipliers admit a factorization}

In this subsection, we prove the remaining implication (ii) $\Rightarrow$ (iii) of
Theorem \ref{thm:multipliers_factor},
that is, the factorization for completely contractive multipliers $\theta$ of $\cH \odot \cH$.
To this end, we use dilation theory to obtain a representation for the adjoint
$M_\theta^\dagger$ as in the proof of Proposition \ref{prop:factorization_multiplier}.
We emphasize that it is \emph{complete} contractivity that enables this use of dilation theory.
The first step is the following consequence of the Haagerup--Paulsen--Wittstock
dilation theorem.

\begin{lem}
  \label{lem:HPW}
  Let $A: \HAN(\cH) \to \HAN(\cH)$ be a completely contractive linear map
  that is (weak-$*$,weak-$*$) continuous. Then there exist linear contractions
  $V: \cH \to \cH \otimes \ell^2$ and $W: \ol{\cH} \to \ol{\cH} \otimes \ell^2$ such that
  \begin{equation*}
    A(H_b) = W^* (H_b \otimes I_{\ell^2}) V
  \end{equation*}
  for all $b \in \Han(\cH)$.
\end{lem}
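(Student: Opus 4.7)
The strategy is to reduce to the classical Stinespring dilation theorem via Paulsen's off-diagonal trick, and then exploit the fact that every normal $*$-representation of the type I factor $B(\cH \oplus \ol{\cH})$ is (unitarily equivalent to) an amplification by the identity.

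First, I would embed $B(\cH, \ol{\cH})$ into $B(\cH \oplus \ol{\cH})$ as the $(2,1)$-corner, and consider the weak-$*$ closed operator system
\[
  S = \left\{ \begin{bmatrix} \lambda I_\cH & H_1^* \\ H_2 & \mu I_{\ol{\cH}} \end{bmatrix} : \lambda, \mu \in \bC,\ H_1, H_2 \in \HAN(\cH) \right\} \subset B(\cH \oplus \ol{\cH}).
\]
Paulsen's off-diagonal trick converts the completely contractive $A$ into a unital completely positive map $\widehat A: S \to B(\cH \oplus \ol{\cH})$ given by
\[
  \widehat A \begin{bmatrix} \lambda I_\cH & H_1^* \\ H_2 & \mu I_{\ol{\cH}} \end{bmatrix} = \begin{bmatrix} \lambda I_\cH & A(H_1)^* \\ A(H_2) & \mu I_{\ol{\cH}} \end{bmatrix}.
\]
Weak-$*$ continuity of $A$ on $\HAN(\cH)$ transfers immediately to weak-$*$ continuity of $\widehat A$ on $S$.

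Next, I would extend $\widehat A$ to a weak-$*$ continuous UCP map $\widetilde A: B(\cH \oplus \ol{\cH}) \to B(\cH \oplus \ol{\cH})$. Existence of some UCP extension is Arveson's extension theorem, and weak-$*$ continuity of the extension can be arranged by passing to its normal part, which remains UCP and continues to agree with $\widehat A$ on $S$ since $\widehat A$ is already normal there. Stinespring's theorem applied to $\widetilde A$ then yields an isometry $V_0: \cH \oplus \ol{\cH} \to (\cH \oplus \ol{\cH}) \otimes \cL$, for some Hilbert space $\cL$, such that $\widetilde A(X) = V_0^*(X \otimes I_\cL) V_0$ for every $X \in B(\cH \oplus \ol{\cH})$; the amplified form of the representation comes from the structure theorem for normal $*$-representations of $B(\cH \oplus \ol{\cH})$, and separability of $\cH$ lets us take $\cL = \ell^2$.

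Finally, I would write $V_0$ as a $2 \times 2$ block of operators $V_{i,j}$ and apply $\widetilde A$ to the element of $S$ whose $(2,1)$-entry is $H_b$ and whose other entries are zero. Expanding $V_0^*(X \otimes I_\cL)V_0$ in block form and reading off the $(2,1)$-entry of the resulting matrix identity produces $A(H_b) = V_{2,2}^* (H_b \otimes I_\cL) V_{1,1}$, so that $V := V_{1,1}: \cH \to \cH \otimes \ell^2$ and $W := V_{2,2}: \ol{\cH} \to \ol{\cH} \otimes \ell^2$ are contractions (as compressions of the isometry $V_0$) satisfying the desired identity. The main obstacle is the second step, namely producing a weak-$*$ continuous UCP extension; this rests on the standard normal/singular decomposition of UCP maps on $B(\cH \oplus \ol{\cH})$ together with the weak-$*$ closedness of $S$.
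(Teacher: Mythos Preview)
Your overall strategy---Paulsen's off-diagonal trick, Arveson extension, Stinespring, then the structure of normal representations of a type~I factor---is sound and is essentially a hand-unpacked version of the Haagerup--Paulsen--Wittstock theorem that the paper invokes as a black box. However, there is a genuine gap in your second step. The claim that the normal part $\widetilde A_n$ of an Arveson extension $\widetilde A$ ``remains UCP and continues to agree with $\widehat A$ on $S$ since $\widehat A$ is already normal there'' is not justified. The singular part $\widetilde A_s$ of a UCP map on $B(\cH\oplus\ol{\cH})$ vanishes on the compact operators, but there is no reason for it to vanish on all of $S$: the diagonal scalars $\lambda I_{\cH}\oplus\mu I_{\ol{\cH}}$ are not compact, and neither are the non-compact Hankel operators in the corners. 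A trivial instance of the failure: take $S=\bC I$ with $\widehat A(\lambda I)=\lambda I$ and extend by $X\mapsto\omega(X)I$ for any singular state $\omega$; the normal part of that extension is zero. In particular $\widetilde A_n$ need not be unital (this alone is harmless, since Stinespring applies to completely positive contractions), and, more to the point, $\widetilde A_n$ need not reproduce $A$ on non-compact Hankel operators.

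The paper sidesteps this difficulty by restricting $A$ to the space $\HAN_0(\cH)$ of \emph{compact} Hankel operators from the outset. After a non-canonical unitary identification $\ol{\cH}\cong\cH$, this becomes a completely contractive map from a subspace of $K(\cH)$ into $B(\cH)$, to which the Haagerup--Paulsen--Wittstock theorem applies directly; the resulting $*$-representation of $K(\cH)$ is automatically a multiple of the identity, with no normality hypothesis needed. The dilation formula is thus obtained first for $H_b\in\HAN_0(\cH)$ and only then extended to all of $\HAN(\cH)$ using the weak-$*$ density of $\HAN_0(\cH)$ together with the assumed weak-$*$ continuity of $A$. Your argument can be repaired along the same lines: since $\widetilde A_s$ does vanish on compacts, your $\widetilde A_n$ agrees with $\widehat A$ on $\HAN_0(\cH)$ sitting in the corner, so your block-reading yields $A(H_b)=V_{2,2}^*(H_b\otimes I)V_{1,1}$ for compact $H_b$, and then weak-$*$ density finishes the job.
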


\begin{proof}
  Recall that $\HAN(\cH) \subset B(\cH,\ol{\cH})$ and $\HAN_0(\cH) \subset K(\cH,\ol{\cH})$.
  To be in the more familiar setting
  of spaces of operators on a single Hilbert space,
  we fix a (non-canonical) linear unitary $U: \ol{\cH} \to {\cH}$ and define
  $\widetilde \HAN_0(\cH) = U \HAN_0(\cH) \subset K(\cH)$
  and
  \begin{equation*}
    \widetilde A: \widetilde \HAN_0(\cH) \to B(\cH), \quad \widetilde A(U H_b)
    = U A(H_b).
  \end{equation*}
  Then $\widetilde A$ is completely contractive, so by the Haagerup--Paulsen--Wittstock
  dilation theorem (Theorems 8.2 and 8.4 in \cite{Paulsen02}), there exist a Hilbert
  space $\cF \supset \cH$, a $*$-representation
  $\pi: K(\cH) \to B(\cF)$ and contractions $X,Y: \cH \to \cF$ so that
  \begin{equation*}
    \widetilde A(U H_b) = X^* \pi(U H_b) Y \quad (H_b \in \HAN_0(\cH)).
  \end{equation*}
  Since $\cH$ is separable, $\cF$ can be chosen to be separable as well.
  Every $*$-representation of $K(\cH)$ is unitarily equivalent to a multiple
  of the identity representation, hence there exist contractions $V_0,W_0: \cH \to \cH \otimes \ell^2$
  so that
  \begin{equation*}
    U A(H_b) = \widetilde A(U H_b) = W_0^* (U H_b \otimes I) V_0 \quad (H_b \in \HAN_0(\cH)).
  \end{equation*}
  Thus, defining $V = V_0$ and
  $W = (U^* \otimes I) W_0 U$, we see that
  \begin{equation}
    \label{eqn:dilation}
    A(H_b) = W^* ( H_b \otimes I) V \quad (H_b \in \HAN_0(\cH)).
  \end{equation}
  Recall from Subsection \ref{ss:Hankel} that $\HAN_0(\cH)$
  is weak-$*$ dense in $\HAN(\cH)$ and that the inclusion
  $\HAN(\cH) \subset B(\cH,\ol{\cH})$ is (weak-$*$,weak-$*$) continuous,
  so the (weak-$*$,weak-$*$) continuity of $A$ therefore implies
  that \eqref{eqn:dilation} holds whenever $H_b \in \HAN(\cH)$.
\end{proof}

\begin{rem}
  The use in the previous proof of the somewhat unnatural operator $U: \ol{\cH} \to \cH$
  can be avoided by using ``rectangular'' dilation theory, see for example \cite{FHL16}.
  In this setting, $A$ dilates to a triple representation of the TRO $K(\cH,\ol{\cH})$,
  and every triple representation of $K(\cH,\ol{\cH})$ is unitarily equivalent
  to a multiple of the identity representation.
\end{rem}

We are ready to prove the remaining implication (ii) $\Rightarrow$ (iii) of Theorem \ref{thm:mult_char}.

\begin{thm}
  \label{thm:multipliers_factor}
  Let $\cH$ be a normalized complete Nevanlinna--Pick space and let $\theta$
  be a completely contractive multiplier of $\cH \odot \cH$.
  Then there exist $(\varphi_n), (\psi_n) \in M_1^C(\cH)$ such that
      $\theta = \sum_{n=1}^\infty \varphi_n \psi_n$.
\end{thm}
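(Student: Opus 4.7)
The strategy is to work on the dual side. Since $M_\theta:\cH\odot\cH\to\cH\odot\cH$ is completely contractive, its Banach space adjoint $M_\theta^\dagger:\HAN(\cH)\to\HAN(\cH)$ is completely contractive and weak-$*$ continuous. Moreover, by Lemma \ref{lem:mult_adjoint}(b), $M_\theta^\dagger$ is a $\Mult(\ol{\cH})^*$-$\Mult(\cH)$-bimodule map. The plan is to apply Lemma \ref{lem:HPW} to dilate $M_\theta^\dagger$, upgrade that dilation so that its data intertwines the multiplier action, and finally evaluate at kernel functions to read off the factorization of $\theta$.

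Lemma \ref{lem:HPW} provides contractions $V:\cH\to\cH\otimes\ell^2$ and $W:\ol{\cH}\to\ol{\cH}\otimes\ell^2$ with
\[
M_\theta^\dagger(H_b)=W^*(H_b\otimes I_{\ell^2})V,\qquad b\in\Han(\cH).
\]
Writing $V=\operatorname{col}(V_n)$ and $W=\operatorname{col}(W_n)$ along a fixed basis of $\ell^2$, the target is to show that each $V_n=T_{\varphi_n}$ for some $\varphi_n\in\Mult(\cH)$ and each $W_n=T_{\ol{\psi_n}}$ for some $\psi_n\in\Mult(\cH)$, with the contractivity of $V,W$ automatically forcing $(\varphi_n),(\psi_n)\in M_1^C(\cH)$. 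Given such a representation and Lemma \ref{lem:mult_adjoint}(c), evaluating $M_\theta^\dagger(H_{k_z})=\theta(z)H_{k_z}=W^*(H_{k_z}\otimes I)V$ on the constant function $1\in\cH$ and pairing with $\ol{1}\in\ol{\cH}$ produces, via $V1=\sum_n\varphi_n\otimes e_n$, $W\ol{1}=\sum_n\ol{\psi_n}\otimes e_n$, and the rank-one identity $H_{k_z}f=f(z)\ol{k_z}$, the relation $\theta(z)=\sum_n\varphi_n(z)\psi_n(z)$.

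The principal obstacle is promoting $V,W$ to operators that intertwine the multiplier action: $VT_\varphi=(T_\varphi\otimes I)V$ and $WT_{\ol{\varphi}}=(T_{\ol{\varphi}}\otimes I)W$ for all $\varphi\in\Mult(\cH)$. Substituting the dilation into the bimodule identities of Lemma \ref{lem:mult_adjoint}(b) yields the defect relations
\[
W^*(H_b\otimes I)\bigl[(T_\varphi\otimes I)V-VT_\varphi\bigr]=0
\]
and a symmetric one on the $W$-side, valid for every $b\in\Han(\cH)$ and $\varphi\in\Mult(\cH)$. My plan is to convert these ``partial'' identities into honest intertwinings by choosing the dilation to be minimal, namely by compressing $V$ and $W$ to the smallest reducing subspaces of $\cH\otimes\ell^2$ and $\ol{\cH}\otimes\ell^2$ that still carry the dilation. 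Because $\{H_{k_z}:z\in X\}$ spans a weak-$*$ dense set in $\HAN(\cH)$ and because the rank-one operators $H_{k_z}$ separate points on $\cH$, minimality should force the defects to vanish. If this direct approach runs into trouble, the fallback is to invoke a bimodule version of the Haagerup--Paulsen--Wittstock theorem (for instance via Wittstock's decomposition of completely bounded module maps) that produces $V,W$ with the intertwining baked in from the outset.

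Once the intertwining is established, the components $V_n,W_n$ commute with every $T_\varphi$; a standard argument using the multiplier $\varphi_n:=V_n 1$ (well defined since $1\in\cH$ for a normalized complete Nevanlinna--Pick space) shows that $V_n=T_{\varphi_n}$, and symmetrically $W_n=T_{\ol{\psi_n}}$. The kernel-function computation in the previous paragraph then produces the required factorization $\theta=\sum_n\varphi_n\psi_n$, completing the proof.
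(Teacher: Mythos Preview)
Your setup is exactly right: pass to $M_\theta^\dagger$, apply Lemma~\ref{lem:HPW} to obtain $V,W$, and use the bimodule identities from Lemma~\ref{lem:mult_adjoint}(b) to derive the defect relation
\[
W^*(H_b\otimes I)\bigl[(T_\varphi\otimes I)V-VT_\varphi\bigr]=0\qquad(b\in\Han(\cH),\ \varphi\in\Mult(\cH)).
\]
The kernel-function endgame is also correct. The gap is in the middle step, where you hope that ``minimality'' of the dilation forces the defect to vanish outright.

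Here is why that does not work. Set $\cM=\bigl(\bigcap_b\ker(W^*(H_b\otimes I))\bigr)^\perp$ and $X=P_\cM V$; this is the natural minimization and still gives $M_\theta^\dagger(H_b)=W^*(H_b\otimes I)X$. Because $H_bT_\varphi\in\HAN(\cH)$, the space $\cM$ is invariant under $T_\varphi^*\otimes I$, equivalently $\cM^\perp$ is invariant under $T_\varphi\otimes I$. Running the defect computation with $X$ in place of $V$ therefore yields only the \emph{compressed} intertwining
\[
XT_\varphi=P_\cM(T_\varphi\otimes I)X,
\]
not $XT_\varphi=(T_\varphi\otimes I)X$. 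The obstruction $(I-P_\cM)(T_\varphi\otimes I)X$ lands in $\cM^\perp$ and there is no mechanism forcing it to be zero: $\cM$ is only co-invariant for $T_\varphi\otimes I$, not invariant, so even a minimal choice of $V$ need not have multiplier components. Your fallback to a bimodule Haagerup--Paulsen--Wittstock theorem is in spirit correct but does not apply off the shelf, since the module algebras $\Mult(\cH)$ and $\Mult(\ol{\cH})^*$ are non-self-adjoint.

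What closes the gap is the commutant lifting theorem for complete Nevanlinna--Pick spaces. The compressed intertwining $XT_\varphi=P_\cM(T_\varphi\otimes I)X$, together with the co-invariance of $\cM$, is exactly the hypothesis of commutant lifting: it produces a contractive $\Phi\in\Mult(\cH,\cH\otimes\ell^2)$ with $X=P_\cM T_\Phi$, hence $M_\theta^\dagger(H_b)=W^*(H_b\otimes I)T_\Phi$. A symmetric argument on the $W$-side replaces $W$ by $T_{\ol\Psi}$, and then your kernel-function evaluation yields $\theta=\sum_n\varphi_n\psi_n$. In short, the paper's proof follows your outline precisely up to the defect relation, but uses commutant lifting rather than minimality to manufacture the multiplier $\Phi$; this is the one missing idea in your proposal.
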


\begin{proof}
  Since $\theta$ is a completely contractive multiplier of $\cH \odot \cH$,
  the adjoint map $M_\theta^\dagger: \HAN(\cH) \to \HAN(\cH)$ is a (weak-$*$,weak-$*$) continuous complete contraction.
  Hence, the dilation theoretic Lemma \ref{lem:HPW} implies that there
  exist contractions $V: \cH \to \cH \otimes \ell^2$
  and $W: \ol{\cH} \to \ol{\cH} \otimes \ell^2$ such that
  \begin{equation}
    \label{eqn:han_dilation}
    M_\theta^\dagger(H_b) = W^* (H_b \otimes I) V
  \end{equation}
  for all $b \in \Han(\cH)$. We will show that $V$ and $W$ can be replaced with suitable
  multiplication operators, thus obtaining
  a representation as in the proof of Proposition \ref{prop:factorization_multiplier}.

  To this end, let
  \begin{equation*}
    \cM = \Big( \bigcap_{b \in \Han(\cH)} \ker (W^* (H_b \otimes I)) \Big)^\bot \subset \cH \otimes \ell^2.
  \end{equation*}
  Since $H_b T_\varphi$ is a Hankel operator for all $\varphi \in \Mult(\cH)$, we find that $\cM$
is invariant under $T_{\varphi}^* \otimes I$ for all $\varphi \in \Mult(\cH)$.
  Let $X = P_\cM V$.
  Then Equation \eqref{eqn:han_dilation} implies that
  \begin{equation}
    \label{eqn:han_dilation_X}
    M_\theta^\dagger(H_b) = W^* (H_b \otimes I) X
  \end{equation}
  for all $b \in \Han(\cH)$. Thus, by part (b) of Lemma \ref{lem:mult_adjoint},
  we obtain for all $\varphi \in \Mult(\cH)$ and all $b \in \Han(\cH)$
  the identity
  \begin{align*}
    W^* (H_b \otimes I) X T_\varphi = M_\theta^\dagger(H_b) T_\varphi =
    M_\theta^\dagger(H_b T_\varphi)= W^* (H_b \otimes I) (T_\varphi \otimes I) X.
  \end{align*}
  Therefore, for all $\varphi \in \Mult(\cH)$, we find that
  \begin{equation*}
    W^* (H_b \otimes I) [ X T_\varphi - (T_\varphi \otimes I) X] = 0
  \end{equation*}
  for all $b \in \Han(\cH)$, so the definition of $\cM$ shows that
  \begin{equation*}
    X T_\varphi = P_\cM (T_\varphi \otimes I) X
  \end{equation*}
  for all $\varphi \in \Mult(\cH)$. In this setting, the Ball--Trent--Vinnikov commutant lifting
  theorem (see \cite[Theorem 5.1]{BTV01}) implies that there exists a contractive multiplier
  $\Phi \in \Mult(\cH, \cH \otimes \ell^2)$ with $X = P_\cM T_\Phi$, and hence \eqref{eqn:han_dilation_X}
  shows that
  \begin{equation*}
    M_\theta^\dagger(H_b) = W^*(H_b \otimes I) P_\cM T_\Phi = W^* (H_b \otimes I) T_\Phi
  \end{equation*}
  for all $b \in \Han(\cH)$.

  A similar argument, applied to the space
  \begin{equation*}
    \cN = \bigvee_{b \in \Han(\cH)} \ran ( (H_b \otimes I) T_\Phi) \subset \ol{\cH} \otimes \ell^2,
  \end{equation*}
  shows that there exists a contractive multiplier $\Psi \in \Mult(\cH, \cH \otimes \ell^2)$ such that
  \begin{equation*}
    M_\theta^\dagger(H_b) = T_{\ol{\Psi}}^* (H_b \otimes I) T_\Phi
  \end{equation*}
  for all $b \in \Han(\cH)$.

  To finish the proof, we write $\Phi = (\varphi_n)$ and $\Psi = (\psi_n)$ with $(\varphi_n),(\psi_n) \in M^C_1(\cH)$,
  so that
  \begin{align*}
    \langle M_\theta^\dagger(H_{b}) 1, \ol{1} \rangle_{\ol{\cH}}
    = \langle T_{\ol{\Psi}}^* (H_b \otimes I) T_\Phi 1 ,\ol{1} \rangle_{\ol{\cH}}
    = \sum_{n=1}^\infty \langle H_b \varphi_n, \ol{\psi_n} \rangle_{\ol{\cH}}
    = \sum_{n=1}^\infty \langle \varphi_n \psi_n, b \rangle_{\cH}.
  \end{align*}
  Choosing $b = k_z$ and using part (c) of Lemma \ref{lem:mult_adjoint}, we see that
  \[\theta(z) = \sum_{n=1}^\infty \varphi_n(z) \psi_n(z)\] as desired.
\end{proof}

The proof above shows that Theorem \ref{thm:multipliers_factor} can be regarded as a dilation
theorem for the completely bounded bimodule map $M_\theta^\dagger: \HAN(\cH) \to \HAN(\cH)$.
In different settings, dilation theorems for completely bounded bimodule maps were obtained by several
authors, see for instance \cite[Theorem 3.1]{Smith91} and the references given there.

\section{\texorpdfstring{$\Mult(\cH \odot \cH)$}{Mult(H dot H)} as an operator space}
\label{sec:mult_op_space}

In this section, we endow $\Mult(\cH \odot \cH)$ with an operator space structure.
Recall that if $V$ and $W$ are operator spaces, then $\CB(V,W)$ is the space of all completely bounded maps
from $V$ into $W$. This space becomes itself an operator space, via the identification
$M_n(\CB(V,W)) = \CB(V,M_n(W))$; see \cite[Section 3.2]{ER00}.
It follows from Theorem \ref{thm:mult_char} that every multiplier of $\cH \odot \cH$
defines a completely bounded map on $\cH \odot \cH$, so we can regard $\Mult(\cH \odot \cH) \subset \CB(\cH \odot \cH, \cH \odot \cH)$ and we endow $\Mult(\cH \odot \cH)$ with the resulting operator space structure.

\subsection{Factoring elements of \texorpdfstring{$M_n(\Mult(\cH \odot \cH))$}{Mn(Mult(H dot H))}}

First, we establish a generalization of the equivalence of (ii) and (iii) of Theorem \ref{thm:mult_char}
for elements of $M_n(\Mult(\cH \odot \cH))$.

Given $\Phi,\Psi \in \Mult(\cH\otimes \bC^n, \cH \otimes \ell^2)$, say
\begin{equation*}
  \Phi =
  \begin{bmatrix}
    \varphi_{1 1} & \varphi_{1 2} & \ldots & \varphi_{1 n} \\
    \varphi_{2 1} & \varphi_{2 2} & \ldots & \varphi_{2 n} \\
    \vdots & \vdots & \ddots & \vdots
  \end{bmatrix}
  \quad
  \text{ and }
  \quad
  \Psi =
  \begin{bmatrix}
    \psi_{1 1} & \psi_{1 2} & \ldots & \psi_{1 n} \\
    \psi_{2 1} & \psi_{2 2} & \ldots & \psi_{2 n} \\
    \vdots & \vdots & \ddots & \vdots
  \end{bmatrix},
\end{equation*}
let $\Psi^T$ denote the transpose of the matrix $\Psi$ and define an $n \times n$ matrix
$\Psi^T \Phi$ of functions on $X$ by
\begin{equation*}
  (\Psi^T \Phi)_{i j} = \sum_{k=1}^\infty \varphi_{k j} \psi_{k i} \quad (1\leq i,j\leq n).
\end{equation*}
Note that the sum converges pointwise absolutely by the Cauchy--Schwarz inequality.

With this notation, the norm on $M_n(\Mult(\cH \odot \cH))$ can be described as follows.
\begin{thm}
  \label{thm:weak_product_vector}
  Let $\cH$ be a normalized complete Nevanlinna--Pick space on $X$ and let $\Theta$ be an $n \times n$
  matrix of functions on $X$. The following statements are equivalent.
  \begin{enumerate}[label=\normalfont{(\roman*)}]
    \item The matrix $\Theta$ belongs to the closed unit ball of $M_n(\Mult(\cH \odot \cH))$.
    \item There exist $\Phi,\Psi$ in the closed unit ball of $\Mult(\cH\otimes \bC^n, \cH \otimes \ell^2)$ so that $\Theta = \Psi^T \Phi$.
  \end{enumerate}
\end{thm}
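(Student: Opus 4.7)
The plan is to adapt the proof strategy of Theorem~\ref{thm:mult_char} to the matricial setting. Under the standard identification $M_n(\CB(\cH \odot \cH)) \cong \CB(\cH \odot \cH, M_n(\cH \odot \cH))$, a matrix $\Theta$ lies in the closed unit ball of $M_n(\Mult(\cH \odot \cH))$ if and only if the map $\widetilde{\Theta} \colon \cH \odot \cH \to M_n(\cH \odot \cH)$ sending $f$ to $[\theta_{ij} f]$ is completely contractive. Passing to the dual operator space, this is equivalent to the complete contractivity of a suitable adjoint map between the corresponding matricial spaces of Hankel operators.

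For the direction (ii) $\Rightarrow$ (i) I would mimic Proposition~\ref{prop:factorization_multiplier}. Given the factorization $\theta_{ij} = \sum_k \varphi_{kj} \psi_{ki}$, an entrywise application of Lemma~\ref{lem:mult_adjoint}(a) expresses the dual of $\widetilde{\Theta}$ evaluated on a matricial Hankel input $[H_{ij}]$ as a double sum that rearranges, using the intertwining identity $H_b T_\psi = T_{\overline{\psi}}^* H_b$, into a composition of the form $T_{\overline{\Psi}}^* \cdot (\text{an amplification of } [H_{ij}]) \cdot T_\Phi$. Here $T_\Phi \colon \cH \otimes \bC^n \to \cH \otimes \ell^2$ is a contraction since $\Phi$ is a contractive multiplier, and $T_{\overline{\Psi}}^* \colon \overline{\cH} \otimes \ell^2 \to \overline{\cH} \otimes \bC^n$ is likewise a contraction because conjugation preserves multiplier norms. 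Repeating this argument at every matricial amplification level yields the required complete contractivity of $\widetilde{\Theta}$.

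For (i) $\Rightarrow$ (ii) I would follow the template of Theorem~\ref{thm:multipliers_factor}. The dual map of $\widetilde{\Theta}$ is a weak-$*$ continuous complete contraction, so the Haagerup--Paulsen--Wittstock dilation theorem applied at the matricial level (an analogue of Lemma~\ref{lem:HPW}) produces contractions $V \colon \cH \otimes \bC^n \to \cH \otimes \ell^2$ and $W \colon \overline{\cH} \otimes \bC^n \to \overline{\cH} \otimes \ell^2$ realising this dual as $W^*(\,\cdot \otimes I\,)V$. Entrywise intertwining with scalar multipliers (via Lemma~\ref{lem:mult_adjoint}(b), applied with $T_\varphi \otimes I_{\bC^n}$ on the domain) produces commutation relations of the form $X T_\varphi = P_\cM (T_\varphi \otimes I) X$ for an appropriate invariant subspace $\cM$, exactly as in the scalar proof. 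The commutant lifting theorem then replaces $V$ and $W$ by contractive multipliers $\Phi, \Psi \in \Mult(\cH \otimes \bC^n, \cH \otimes \ell^2)$, and a final test against the rank-one Hankel operators $H_{k_z}$ using Lemma~\ref{lem:mult_adjoint}(c) extracts the pointwise identity $\Theta(z) = \Psi(z)^T \Phi(z)$.

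The main obstacle is the matricial bookkeeping. The Haagerup--Paulsen--Wittstock dilation must be set up so that $V$ and $W$ carry a $\bC^n$ factor on their domains, and the invariant subspace used in commutant lifting must respect this block structure, so that the resulting multipliers $\Phi, \Psi$ live in $\Mult(\cH \otimes \bC^n, \cH \otimes \ell^2)$ rather than $\Mult(\cH, \cH \otimes \ell^2)$. The transpose in $\Theta = \Psi^T \Phi$ emerges naturally from the identity $H_b T_\psi = T_{\overline{\psi}}^* H_b$, which swaps a multiplier from the right of $H_b$ to a multiplier adjoint on its left; in the matricial setting this swap matches the index structure of $\Psi$ with its transpose.
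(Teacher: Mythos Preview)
Your proposal is correct and follows essentially the same route as the paper: pass to the adjoint, express the dual map as a sandwich $T_{\overline{\Psi}}^*(H_b \otimes I)T_\Phi$ for (ii) $\Rightarrow$ (i), and for (i) $\Rightarrow$ (ii) apply Haagerup--Paulsen--Wittstock followed by commutant lifting with the $\bC^n$ factor carried along, then test at $b = k_z$. The one point the paper makes explicit that you gloss over is Lemma~\ref{lem:adjoint}: the entrywise adjoint $[M_{\theta_{ij}}] \mapsto [M_{\theta_{ij}}^\dagger]$ is a complete isometry $M_n(\CB(\cH\odot\cH)) \to M_n(\CB(\HAN(\cH)))$, so the relevant dual map is $\HAN(\cH) \to M_n(\HAN(\cH))$, $H_b \mapsto [M_{\theta_{ij}}^\dagger(H_b)]$, with a \emph{single} Hankel operator as input rather than a matrix of them; your phrasing ``matricial Hankel input $[H_{ij}]$'' is slightly off on this bookkeeping, but the substance is the same.
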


The proof is closely modeled after those of Proposition \ref{prop:factorization_multiplier}
and Theorem \ref{thm:multipliers_factor}. To use duality, we require
the following result refining the fact that $\|A^*\|_{cb} = \|A\|_{c b}$
for a completely bounded map $A: V \to W$.
This result is undoubtedly known, but we were not able to find an explicit reference.

\begin{lem}
  \label{lem:adjoint}
  Let $V$ and $W$ be operator spaces. Then, the map
  \begin{equation*}
    \CB(V,W) \to \CB(W^*,V^*), \quad A \mapsto A^*
  \end{equation*}
  is a complete isometry.
\end{lem}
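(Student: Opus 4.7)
The plan is to leverage the well-known fact that for a single completely bounded map $T: E \to F$ between operator spaces one has $\|T^*\|_{cb} \le \|T\|_{cb}$ (see, e.g., \cite[Section 3.2]{ER00}), and upgrade this to a complete-isometry statement using the canonical identifications $M_n(\CB(V,W)) = \CB(V, M_n(W))$ and $M_n(\CB(W^*,V^*)) = \CB(W^*, M_n(V^*))$ that come with the operator space structure on $\CB$. Under these identifications, an element $[A_{ij}] \in M_n(\CB(V,W))$ corresponds to the map $\widetilde A: V \to M_n(W)$, $v \mapsto [A_{ij} v]$, while $[A_{ij}^*]$ corresponds to the map $\widetilde B: W^* \to M_n(V^*)$, $\omega \mapsto [A_{ij}^* \omega]$.

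The key observation, which I would verify by direct computation, is that $\widetilde B$ factors as $(\widetilde A^*)^{(n)} \circ \iota$, where $\iota: W^* \to M_n(M_n(W)^*)$ sends a functional $\omega$ to the matrix of functionals whose $(i,j)$-entry is $[w_{kl}] \mapsto \omega(w_{ij})$. Under the further identification $M_n(M_n(W)^*) = \CB(M_n(W), M_n)$, this $\iota$ is nothing but the amplification map $\omega \mapsto \omega^{(n)}$. I would then verify that this amplification map is a complete isometry: at matrix level $k$, the canonical shuffle isomorphism $M_k(M_n) = M_n(M_k)$ identifies $\iota^{(k)}([\omega_{pq}])$ with the amplification $[\omega_{pq}]^{(n)}: M_n(W) \to M_n(M_k)$, whose operator norm equals $\|[\omega_{pq}]\|_{M_k(W^*)}$ by the very definition of the dual operator space structure.

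Combining the complete isometry of $\iota$ with the standard inequality $\|\widetilde A^*\|_{cb} \le \|\widetilde A\|_{cb}$ yields $\|\widetilde B\|_{cb} \le \|\widetilde A\|_{cb}$, i.e.,
\[
\|[A_{ij}^*]\|_{M_n(\CB(W^*,V^*))} \le \|[A_{ij}]\|_{M_n(\CB(V,W))}.
\]
The reverse inequality will follow by biduality: applying the same estimate to $[A_{ij}^*]$ gives $\|[A_{ij}^{**}]\|_{M_n(\CB(V^{**},W^{**}))} \le \|[A_{ij}^*]\|_{M_n(\CB(W^*,V^*))}$, and since the canonical embeddings $V \hookrightarrow V^{**}$ and $W \hookrightarrow W^{**}$ are complete isometries, the left-hand side dominates $\|[A_{ij}]\|_{M_n(\CB(V,W))}$.

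I expect the main obstacle to be the bookkeeping around the amplification-and-shuffle identification that makes $\iota$ a complete isometry and exhibits the factorization $\widetilde B = (\widetilde A^*)^{(n)} \circ \iota$; once these are in place, the remainder is a direct application of standard operator-space facts from \cite{ER00}.
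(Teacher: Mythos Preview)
Your proposal is correct and follows essentially the same strategy as the paper: establish that $A \mapsto A^*$ is a complete contraction by working at each matrix level via the identifications $M_n(\CB(V,W)) = \CB(V,M_n(W))$ and $M_n(\CB(W^*,V^*)) = \CB(W^*,M_n(V^*))$, and then deduce the reverse inequality from biduality and the completely isometric inclusion into the bidual. The paper simply calls the first step ``an elementary computation'', whereas you spell it out via the factorization $\widetilde B = (\widetilde A^*)^{(n)} \circ \iota$ through the amplification map; both routes lead to the same conclusion.
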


\begin{proof}
  Let
  \begin{equation*}
    [A_{i j}] \in M_n(\CB(V,W)) = \CB(V,M_n(W)).
  \end{equation*}
  An elementary computation shows that the norm of $[A_{i j}^*]$ in $M_n(\CB(W^*,V^*)) = \CB(W^*,M_n(V^*))$
  is at most that of $[A_{i j}]$ in $M_n(\CB(V,W)) = \CB(V,M_n(W))$. Thus, the map $A \mapsto A^*$
  is a complete contraction. Applying this map again, using that $A^{**}$ agrees with $A$
  on $V$ and the fact that the inclusion of an operator space into its bidual is a complete isometry (see
  \cite[Proposition 3.2.1]{ER00}), we conclude that $A \mapsto A^*$
  is a complete isometry.
\end{proof}

\begin{proof}[Proof of Theorem \ref{thm:weak_product_vector}]
  (ii) $\Rightarrow$ (i) As in the proof of Proposition \ref{prop:factorization_multiplier}, an approximation
  argument allows us to assume that $\Phi,\Psi \in \Mult(\cH\otimes \bC^n, \cH\otimes \bC^N)$ for some $N \in \bN$. In particular, $\Theta\in \Mult(\cH\otimes \bC^n)$. To compute
  the norm of
  \begin{equation*}
    \Theta = [\theta_{i j}] \in M_n(\Mult(\cH \odot \cH)) \subset M_n(\CB( \Mult(\cH \odot \cH), \Mult(\cH \odot \cH))),
  \end{equation*}
  we apply Lemma \ref{lem:adjoint} and instead compute the norm of
  \begin{equation*}
    [ M_{\theta_{i j}}^\dagger] \in M_n( \CB( \HAN(\cH), \HAN(\cH)))
    = \CB( \HAN(\cH), M_n(\HAN(\cH))).
  \end{equation*}
  So let $b \in \Han(\cH)$. An application of part (a) of Lemma \ref{lem:mult_adjoint} shows that, using notation as in the discussion
  preceding Theorem \ref{thm:weak_product_vector},
  \begin{equation*}
    [ M_{\theta_{i j}}^\dagger (H_b)]
    = \sum_{r=1}^N [ T_{\ol{\psi_{r i}}}^* H_b T_{\varphi_{r j}} ]
    = T_{\ol{\Psi}}^* ( H_b \otimes I_{\bC^N}) T_\Phi.
  \end{equation*}
  Since $T_{\Phi}$ and $T_{\ol{\Psi}}$ have norm at most $1$, this formula implies that $[M_{\theta_{ij}}^\dagger]$
  is a completely contractive map, so (i) holds.

  (i) $\Rightarrow$ (ii) We merely sketch the main steps, as the proof closely follows that of Theorem \ref{thm:multipliers_factor}. Let
  $\Theta = [\theta_{i j}]$ be an element of the unit ball of $M_n(\Mult(\cH \odot \cH))$.
  Using duality, more precisely Lemma \ref{lem:adjoint}, it follows that the map
  \begin{equation*}
    \HAN(\cH) \to M_n(\HAN(\cH)), \quad H_b \mapsto [ M_{\theta_{i j}}^\dagger (H_b)],
  \end{equation*}
  is completely contractive. With minor changes, the dilation theoretic argument in the proof of Lemma \ref{lem:HPW}
  yields linear contractions $V: \cH\otimes \bC^n \to \cH \otimes \ell^2$ and $W: \ol{\cH}\otimes \bC^n \to \ol{\cH} \otimes \ell^2$ such that
  \begin{equation*}
    [M_{\theta_{ij}}^\dagger (H_b)] = W^* (H_b \otimes I) V \quad (b \in \Han(\cH)).
  \end{equation*}
  As in the proof of Theorem \ref{thm:multipliers_factor}, the commutant lifting theorem allows us
  to replace $V$ and $W$ with multiplication operators. More precisely, there are contractive multipliers $\Phi, \Psi \in \Mult(\cH \otimes \bC^n, \cH \otimes \ell^2)$
  so that
  \begin{equation}
    \label{eqn:wp_vector}
    [M_{\theta_{ij}}^\dagger (H_b)] = T_{\ol{\Psi}}^*  (H_b \otimes I) T_\Phi \quad (b \in \Han(\cH)).
  \end{equation}
  Somewhat more explicitly, to find $\Phi$, define $\cM \subset \cH \otimes \ell^2$ and $X = P_\cM V$ verbatim
  as in the proof of Theorem \ref{thm:multipliers_factor}. The bimodule property of $M_{\theta_{i j}}^\dagger$ (part (b) of Lemma \ref{lem:mult_adjoint})
  implies that $X (T_\varphi \otimes I_{\bC^n}) = P_{\cM} (T_\varphi \otimes I_{\ell^2}) X$ for all $\varphi \in \Mult(\cH)$,
  hence the commutant lifting theorem applies. Finally, testing \eqref{eqn:wp_vector} for $b = k_z$ yields
  $\Theta(z) = \Psi^T(z) \Phi(z)$, so we have found the desired factorization.
\end{proof}

The ideas used to prove Theorems \ref{thm:mult_char} and \ref{thm:weak_product_vector}
also yield a more concrete description of the norm on $M_n(\cH \odot \cH)$.
If $n = 1$ and $h \in \cH \odot \cH$, then
\begin{equation*}
  \|h\|_{\cH \odot \cH} = \inf \Big\{ \| (f_k) \|_{\cH \otimes \ell^2} \|(g_k)\|_{\cH \otimes \ell^2} : h = \sum_{k=1}^\infty f_k g_k \Big\}.
\end{equation*}
Indeed, this follows from the definition of the norm on $\cH \odot \cH$ by trading constant factors between $f_k$ and $g_k$.
This last formula can be generalized.
The column operator space structure on $\cH$ is defined by the identification $\cH = B(\bC,\cH)$,
and the resulting operator space is denoted by $\cH_c$; see
\cite[Section 3.4]{ER00}. Thus, $M_n(\cH_c) = B(\bC^n,\cH^n)$.
We also require matrices with infinitely many rows. Let $M_{\infty,n}(\cH_c)$ be the space of all matrices
with entries in $\cH$ of the form
\begin{equation*}
  f =
  \begin{bmatrix}
    f_{1 1} & f_{1 2} & \cdots & f_{1 n} \\
    f_{2 1} & f_{2 2} & \cdots & f_{2 n} \\
    \vdots & \vdots & \ddots & \vdots
  \end{bmatrix}
\end{equation*}
satisfying $\sum_{i=1}^\infty \|f_{i j} \|^2 < \infty$ for $1 \le j \le n$.
As we did for finite matrices, we regard such a matrix as a bounded
linear operator from $\bC^n$ to $\cH \otimes \ell^2$, and we set
\begin{equation*}
  \|f\|_{M_{\infty,n}(\cH_c)} = \|f\|_{B(\bC^n, \cH \otimes \ell^2)}.
\end{equation*}
Notice that if $n = 1$, then $M_{\infty,1}(\cH_c) = \cH \otimes \ell^2$ with equality of norms.
Given $f,g \in M_{\infty,n}(\cH_c)$, we define as above an $n \times n$ matrix $g^T f$ of functions on $X$ by
\begin{equation*}
  (g^T f)_{i j}  = \sum_{k=1}^\infty f_{k j} g_{k i} \quad (1 \le i, j \le n).
\end{equation*}
By the Cauchy--Schwarz inequality, the sum converges pointwise absolutely on $X$. 

\begin{lem}
  \label{lem:M_n_weak_product}
  The following assertions are equivalent for an $n \times n$ matrix $h$ of functions on $X$.
  \begin{enumerate}[label=\normalfont{(\roman*)}]
    \item The matrix $h$ belongs to the closed unit ball of $M_n(\cH \odot \cH)$.
    \item There exist $f$ and $g$ in the closed unit ball of $M_{\infty,n}(\cH_c)$   so that $h = g^T f$.
  \end{enumerate}
  Thus, if $h \in M_n(\cH \odot \cH)$, then
  \begin{equation*}
    \|h\|_{M_n(\cH \odot \cH)} = \inf \big\{
      \|f\|_{M_{\infty,n}(\cH_c)}
    \|g\|_{M_{\infty,n}(\cH_c)} : h = g^T f \},
  \end{equation*}
  and the infimum is attained.
\end{lem}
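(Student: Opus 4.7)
The plan is to mirror the arguments used to prove Proposition \ref{prop:factorization_multiplier} and Theorem \ref{thm:weak_product_vector}, reducing the problem to a dilation-theoretic statement via the dual operator space structure. By the discussion in Subsection \ref{ss:os}, an element $h = [h_{i j}] \in M_n(\cH \odot \cH)$ corresponds under this duality to a completely bounded linear map
\begin{equation*}
  T_h : \HAN_0(\cH) \to M_n, \qquad H_b \mapsto \bigl[\, [h_{i j}, H_b] \,\bigr],
\end{equation*}
with $\|h\|_{M_n(\cH \odot \cH)} = \|T_h\|_{c b}$. Given $g \in M_{\infty, n}(\cH_c)$, let $\widetilde{g} : \bC^n \to \ol{\cH} \otimes \ell^2$ denote the operator with matrix entries $\overline{g_{k i}}$, which satisfies $\|\widetilde{g}\| = \|g\|_{M_{\infty, n}(\cH_c)}$ via the anti-unitary $f \mapsto \ol{f}$.

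For the implication (ii) $\Rightarrow$ (i), suppose $h = g^T f$ with $f,g$ in the closed unit ball of $M_{\infty, n}(\cH_c)$. Each entry $h_{i j} = \sum_k f_{k j} g_{k i}$ lies in $\cH \odot \cH$ by Cauchy--Schwarz, and a direct calculation using \eqref{eqn:HAN_wp_dual} yields the factorization
\begin{equation*}
  T_h(H_b) = \widetilde{g}^* (H_b \otimes I_{\ell^2}) f \qquad (H_b \in \HAN_0(\cH)).
\end{equation*}
This formula manifestly extends to a completely contractive map on all of $B(\cH, \ol{\cH})$, since the ampliation $X \mapsto X \otimes I_{\ell^2}$ is a complete isometry and compression by the contractions $f$ and $\widetilde{g}$ is completely contractive. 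Restricting to $\HAN_0(\cH)$ gives $\|T_h\|_{c b} \le 1$, so $h$ belongs to the closed unit ball of $M_n(\cH \odot \cH)$.

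For the implication (i) $\Rightarrow$ (ii), complete contractivity of $T_h$ allows us to invoke the Haagerup--Paulsen--Wittstock dilation theorem exactly as in the proof of Lemma \ref{lem:HPW}, but with target space $B(\bC^n) = M_n$ in place of $B(\cH, \ol{\cH})$; no weak-$*$ continuity hypothesis is needed here since we do not extend the map beyond $\HAN_0(\cH)$. This produces contractions $V : \bC^n \to \cH \otimes \ell^2$ and $W : \bC^n \to \ol{\cH} \otimes \ell^2$ such that
\begin{equation*}
  T_h(H_b) = W^* (H_b \otimes I_{\ell^2}) V \qquad (H_b \in \HAN_0(\cH)).
\end{equation*}
Set $f := V \in M_{\infty, n}(\cH_c)$, and let $g \in M_{\infty, n}(\cH_c)$ be the unique element with $\widetilde{g} = W$; both $f$ and $g$ have norm at most $1$. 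Specializing the identity to $b = k_z$ and using $T_h(H_{k_z}) = [h_{i j}(z)]$ together with the rank-one description $H_{k_z} f_{k j} = f_{k j}(z) \overline{k_z}$, an entrywise comparison yields $h_{i j}(z) = (g^T f)_{i j}(z)$ for every $z \in X$, hence $h = g^T f$.

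The norm formula and the attainment of the infimum follow by a standard scaling argument: any factorization $h = g^T f$ satisfies $\|h\| \le \|f\| \|g\|$ by (ii) $\Rightarrow$ (i), while applying (i) $\Rightarrow$ (ii) to $h/\|h\|$ (when $h \neq 0$) and then rebalancing so that $\|f\| = \|g\| = \|h\|^{1/2}$ produces a factorization realizing equality. I do not anticipate significant technical obstacles; the chief simplification over the proof of Theorem \ref{thm:weak_product_vector} is the absence of a commutant lifting step, since the target class $M_{\infty, n}(\cH_c)$ coincides with $B(\bC^n, \cH \otimes \ell^2)$, so that the dilating contractions furnished by HPW are immediately admissible without needing to be replaced by multiplication operators.
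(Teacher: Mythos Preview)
Your proposal is correct and follows essentially the same route as the paper: both directions are argued via the identification $M_n(\cH \odot \cH) = \CB(\HAN_0(\cH), M_n)$, with (ii) $\Rightarrow$ (i) obtained from the explicit factorization $T_h(H_b) = \ol{g}^*(H_b \otimes I)f$ and (i) $\Rightarrow$ (ii) from the Haagerup--Paulsen--Wittstock dilation followed by testing at $b = k_z$. Your closing remark that no commutant lifting step is needed here (in contrast to Theorem \ref{thm:weak_product_vector}) is exactly the point, and your explicit scaling argument for attainment of the infimum is a welcome clarification of what the paper leaves implicit.
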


\begin{proof}
  (ii) $\Rightarrow$ (i)
  By the Cauchy--Schwarz inequality, the sum defining each entry of $h$ converges
  absolutely in the Banach space $\cH \odot \cH$. In particular, each entry of $h$ belongs to $\cH \odot \cH$.
  We will show that $h$ belongs to the unit ball of $M_n(\cH \odot \cH)$.
  By definition of the operator space structure on $\cH \odot \cH$ as the dual of $\HAN_0(\cH)$,
  we have to show that the map
  \begin{equation*}
    A: \HAN_0(\cH) \to M_n, \quad H_b \mapsto \Big[ [h_{i j}, H_b ] \Big]_{i,j},
  \end{equation*}
  is completely contractive, where
  the inner brackets denote the duality between $\cH \odot \cH$ and $\HAN(\cH)$.
  To this end, notice that for $1 \le i,j \le n$,
  Equation \eqref{eqn:HAN_wp_dual} implies that
  \begin{equation*}
    [h_{i j}, H_b] = \sum_{k=1}^\infty [ f_{k j} g_{k i}, H_b]
    = \sum_{k=1}^\infty \langle H_b f_{k j}, \ol{g_{k i}} \rangle_{\ol{\cH}}
  \end{equation*}
  for all $b \in \Han(\cH)$.
  Let $\ol{g}$ denote the entry-wise complex conjugate of $g$, regarded as a contractive
  operator from $\bC^n$ to $\ol{\cH} \otimes \ell^2$, and let $\ol{g}^*: \ol{\cH} \otimes \ell^2 \to \bC^n$
  be the Hilbert space adjoint of $\ol{g}$. Then
  \begin{equation*}
    A(H_b) = \Big[ [h_{i j}, H_b ] \Big]_{i,j} = \ol{g}^* (H_b \otimes I_{\ell^2}) f \quad (H_b \in \HAN_0(\cH)),
  \end{equation*}
  which implies that the map $A$ is completely contractive.

  (i) $\Rightarrow$ (ii) If $h$ belongs to the unit ball of $M_n(\cH \odot \cH)$,
  then by definition of the operator space structure on $\cH \odot \cH$, the map $A$
  defined in the first part of the proof is completely contractive.
  Applying the Haagerup--Paulsen--Wittstock dilation theorem
  as in the proof of Lemma \ref{lem:HPW} and using the
  fact that every $*$-representation of $K(\cH)$
  is unitarily equivalent to a multiple of the identity representation, we obtain
  linear contractions $V: \bC^n \to \cH \otimes \ell^2$ and $W: \bC^n \to \ol{\cH} \otimes \ell^2$
  so that
  \begin{equation*}
    A(H_b) = W^* (H_b \otimes I_{\ell^2}) V \quad (H_b \in \HAN_0(\cH)).
  \end{equation*}
  Define $f,g \in M_{\infty,n}(\cH)$ by $f = V$
  and $\ol{g} = W$. We see that $f$ and $g$ have norm $1$ and
  \begin{equation*}
    [h_{i j}, H_b] = \sum_{k=1}^\infty \langle H_b f_{k j}, \ol{g_{k i}} \rangle_{\ol{\cH}}
  \end{equation*}
  for $1 \le i,j \le n$ and $H_b \in \HAN_0(\cH)$. Testing this equation for $b = k_z$, we conclude
  that $h(z) = g^T(z) f(z)$, as desired.
\end{proof}

\begin{rem}
Lemma \ref{lem:M_n_weak_product} and Theorem \ref{thm:weak_product_vector} can be restated
  in terms of the Haagerup tensor product $\otimes_h$ of operator spaces (see \cite[Chapter 17]{Paulsen02}, \cite[Paragraph 1.5.4]{BL04} or \cite[Chapter 9]{ER00}), its weak-$*$ version $\otimes_{w*\, h}$ (see \cite[Paragraph 1.6.9]{BL04}) and the
  opposite operator space structure $V^{op}$ of an operator space $V$ (see \cite[Paragraph 1.2.25]{BL04}).
  Concretely, Lemma \ref{lem:M_n_weak_product}
  implies that
  \begin{equation*}
    \cH_c^{op} \otimes_h \cH_c \to \cH \odot \cH, \quad \sum_{n=1}^\infty f_n \otimes g_n \mapsto \sum_{n=1}^\infty f_n g_n,
  \end{equation*}
  is a complete quotient mapping. Theorem \ref{thm:weak_product_vector} implies that
  \begin{equation*}
    \Mult(\cH)^{op} \otimes_{w* \, h} \Mult(\cH) \to \Mult(\cH \odot \cH), \quad \sum_{n=1}^\infty \varphi_n \otimes \psi_n
    \mapsto \sum_{n=1}^\infty \varphi_n \psi_n,
  \end{equation*}
  is a complete quotient mapping. We will not use these formulations.
\end{rem}

\subsection{Comparing the operator space structures of \texorpdfstring{$\Mult(\cH)$ and $\Mult(\cH \odot \cH)$}{Mult(H) and Mult(H dot H)}}

We saw in Corollary \ref{cor:column_row} that if $\cH$ satisfies the column-row property (which is the case for instance
for the Drury--Arveson space), then the inclusion
\begin{equation*}
  \iota: \Mult(\cH) \hookrightarrow \Mult(\cH \odot \cH)
\end{equation*}
is an isomorphism of Banach spaces.
For any normalized complete Nevanlinna--Pick space, the implication (ii) $\Rightarrow$ (i) of Theorem \ref{thm:weak_product_vector} shows that $\iota$ is a complete contraction. If $\cH = H^2(\bD)$, then the norm of $M_n(\Mult(\cH))$
is simply the supremum norm over $\bD$, hence Theorem \ref{thm:weak_product_vector} implies that the same
is true for $M_n(\Mult(\cH \odot \cH))$. In other words, in the case of $H^2(\bD)$, the map $\iota$
is a completely isometric isomorphism. Note however that the entire
space $\CB(\cH \odot \cH)$ is \emph{not} completely boundedly isomorphic
  to an operator algebra unless $\cH \odot \cH$ is isomorphic
to a Hilbert space by \cite[Proposition 5.1.9]{BL04}.

We show that the phenomenon observed above is somewhat special to the univariate Hardy space.

\begin{prop}
  Let $\cH$ be either the Drury--Arveson space $H^2_d$ for $d \ge 2$ or the classical Dirichlet space.
  Then the inclusion
  \begin{equation*}
    \iota: \Mult(\cH) \hookrightarrow \Mult(\cH \odot \cH)
  \end{equation*}
  does not have a completely bounded inverse.
\end{prop}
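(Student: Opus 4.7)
I argue by contradiction: suppose that $\iota^{-1}$ is completely bounded with constant $K$. By Theorem \ref{thm:weak_product_vector}, every factorization $\Theta = \Psi^T \Phi$ with $\Phi, \Psi$ in the closed unit ball of $\Mult(\cH \otimes \bC^n, \cH \otimes \ell^2)$ produces an element of the closed unit ball of $M_n(\Mult(\cH \odot \cH))$, so the hypothesis forces $\|\Psi^T \Phi\|_{M_n(\Mult(\cH))} \le K$ for all such $\Phi$ and $\Psi$.

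The plan is to specialize $\Phi$ to the isometric inclusion of $\cH \otimes \bC^n$ onto the first $n$ coordinates of $\cH \otimes \ell^2$, and to specialize $\Psi$ so that its only nonzero row is a contractive row of multipliers $R = [\psi_1, \ldots, \psi_n]$. A direct computation then gives that $\Psi^T \Phi$ has first column equal to the transposed column $R^T = [\psi_1, \ldots, \psi_n]^T$ and all other columns zero, so its norm in $M_n(\Mult(\cH))$ coincides with the norm of $R^T$ as a column multiplier from $\cH$ into $\cH \otimes \bC^n$. In other words, $\iota^{-1}$ being completely bounded would force every contractive row of multipliers to admit a transposed column of norm at most $K$, uniformly in the length $n$. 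The task becomes to exhibit, in each of the two spaces, a sequence of contractive rows whose transposed columns have unbounded norms.

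For $\cH = H^2_d$ with $d \ge 2$, I take $R_k$ to be the row of length $d^k$ whose entries are the monomials $z_{i_1} \cdots z_{i_k}$ indexed by tuples $(i_1, \ldots, i_k) \in \{1, \ldots, d\}^k$. The row contractivity $R_k R_k^* \le I$ follows by a short induction on $k$ from the fact that the $d$-shift is a row contraction. On the other hand, grouping tuples by multi-index and applying the Drury--Arveson formula $\|z^\alpha\|^2 = \alpha!/|\alpha|!$ yields $\|R_k^T \cdot 1\|^2 = \sum_I \|z^I\|^2 = \binom{k+d-1}{d-1}$, which is unbounded in $k$ provided $d \ge 2$.

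For the classical Dirichlet space $\cD$, I take $R_N = [c_N z^n]_{n=1}^N$ for a scalar $c_N$ to be chosen. Using the identity $M_{z^n}^* z^m = \frac{m+1}{m-n+1} z^{m-n}$ for $m \ge n$ (and zero otherwise), the operator $R_N R_N^*$ is diagonal in the monomial basis with eigenvalue $|c_N|^2 (m+1) \sum_{n=1}^{\min(m,N)} (m - n + 1)^{-1}$ on $z^m$. A brief spectral computation shows that this eigenvalue is maximized at $m = N$, and the choice $|c_N|^2 = (N+1)^{-1} \bigl( \sum_{n=1}^N n^{-1} \bigr)^{-1}$ normalizes the maximum to $1$, rendering $R_N$ contractive. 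Meanwhile $\|R_N^T \cdot 1\|^2 = |c_N|^2 \sum_{n=1}^N (n+1)$ grows like $N/(2 \log N)$, again yielding a contradiction with the uniform bound $K$. The principal technical point in both constructions is the simultaneous verification of row contractivity and unboundedness of the transposed column norm; once this is secured, the sought contradictions are immediate.
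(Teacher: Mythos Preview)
Your argument is correct and is essentially the same as the paper's. The paper phrases it slightly more abstractly---observing from Theorem~\ref{thm:weak_product_vector} that the transpose map $\Theta \mapsto \Theta^T$ is isometric on $M_n(\Mult(\cH \odot \cH))$ but not uniformly bounded on $M_n(\Mult(\cH))$---while you unpack this by exhibiting the specific element $\Psi^T\Phi$ (with $\Phi$ the inclusion and $\Psi$ a single contractive row) and then supplying explicit row/column-gap examples yourself rather than citing \cite{Trent04} and \cite{AHM+18}; your Drury--Arveson construction is a mild variant of the paper's own Lemma~\ref{lem:DA_column_row}, and your Dirichlet computation recovers the example discussed before Lemma~1 of \cite{Trent04}.
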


\begin{proof}
  It follows from Theorem \ref{thm:weak_product_vector} that for each $n \in \bN$, the transpose map
  \begin{equation*}
    M_n(\Mult(\cH \odot \cH)) \to M_n(\Mult(\cH \odot \cH)), \quad \Theta \mapsto \Theta^T,
  \end{equation*}
  is isometric. On the other hand, there exist sequences of multipliers
  $(\varphi_n)$ in $\Mult(\cH)$ that yield a bounded row multiplication operator, but an unbounded column multiplication
  operator. For the Dirichlet space, this can be seen from the discussion preceding Lemma 1 in \cite{Trent04};
  for the Drury--Arveson space, see \cite[Subsection 4.2]{AHM+18}. In particular,
  the norms of the transpose maps
  \begin{equation*}
    M_n(\Mult(\cH)) \to M_n(\Mult(\cH)), \quad \Phi \mapsto \Phi^T,
  \end{equation*}
  are not uniformly bounded in $n$, so that the completely contractive map $\iota$
  does not have a completely bounded inverse.
\end{proof}

In fact, it is possible to determine explicitly the growth of the norms of $(\iota^{-1})^{(n)}$
in the case of the Drury--Arveson space.
We begin with the following easy estimate.

\begin{lem}
  \label{lem:row_column_vector}
  Let $\cH$ be a reproducing kernel Hilbert space that satisfies the column-row property
  with constant $\kappa$. Then
  \begin{equation*}
  \|\Psi^T\|_{\Mult(\cH \otimes \ell^2, \cH\otimes \bC^n)} \le \sqrt{n} \kappa \|\Psi\|_{\Mult(\cH\otimes \bC^n,\cH \otimes \ell^2)}
  \end{equation*}
  for all $\Psi \in \Mult(\cH\otimes \bC^n, \cH \otimes \ell^2)$.
\end{lem}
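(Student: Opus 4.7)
The plan is to decompose $\Psi$ column-by-column, apply the column-row property to each column separately, and then reassemble the resulting rows as a column of $n$ operators. Write $\Psi = [\Psi_1, \Psi_2, \ldots, \Psi_n]$, where each $\Psi_j \in \Mult(\cH, \cH \otimes \ell^2)$ is the $j$-th column of $\Psi$, i.e., a column multiplier of $\cH$ corresponding to the sequence $(\psi_{ij})_{i \in \bN}$. Since compressing a multiplication operator to a distinguished summand of the domain can only decrease its norm, each $\Psi_j$ satisfies $\|\Psi_j\|_{\Mult(\cH, \cH\otimes\ell^2)} \le \|\Psi\|_{\Mult(\cH\otimes\bC^n, \cH\otimes\ell^2)}$.

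The column-row property with constant $\kappa$ then gives, for each $j$, that the row multiplier $\Psi_j^T \in \Mult(\cH \otimes \ell^2, \cH)$ satisfies
\[
\|\Psi_j^T\|_{\Mult(\cH \otimes \ell^2, \cH)} \le \kappa \|\Psi_j\|_{\Mult(\cH, \cH\otimes\ell^2)} \le \kappa \|\Psi\|_{\Mult(\cH\otimes\bC^n, \cH\otimes\ell^2)}.
\]
Now $\Psi^T: \cH \otimes \ell^2 \to \cH \otimes \bC^n$ is precisely the column operator whose $j$-th entry is $\Psi_j^T$, so its norm squared is $\|\sum_{j=1}^n (\Psi_j^T)^* \Psi_j^T\|$, which is at most $\sum_{j=1}^n \|\Psi_j^T\|^2 \le n \kappa^2 \|\Psi\|^2$. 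Taking square roots yields the desired bound.

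There is no real obstacle here: the statement is essentially a bookkeeping consequence of the fact that the column-row property can be applied one column at a time, with the factor $\sqrt{n}$ arising from the trivial estimate that a column of $n$ bounded operators has norm at most $\sqrt{n}$ times the maximum of the individual norms. The one technical point to verify is that restricting $\Psi$ to a coordinate subspace of $\cH \otimes \bC^n$ really does give an operator of smaller norm with entries $(\psi_{ij})_i$, which is immediate from viewing $\Psi$ as an operator matrix acting on the obvious Hilbert space decomposition.
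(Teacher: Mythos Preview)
Your proof is correct and is essentially identical to the paper's own argument: both decompose $\Psi$ into its $n$ columns $\Psi_j$, apply the column-row property to each column to bound the norm of the corresponding row $\Psi_j^T$ (the paper's $R_j$), and then use $\|\Psi^T\|^2 = \bigl\|\sum_{j=1}^n (\Psi_j^T)^* \Psi_j^T\bigr\| \le n\kappa^2\|\Psi\|^2$.
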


\begin{proof}
  Suppose that
  \begin{equation*}
  \Psi =
  \begin{bmatrix}
    \psi_{1 1} & \psi_{1 2} & \ldots & \psi_{1 n} \\
    \psi_{2 1} & \psi_{2 2} & \ldots & \psi_{2 n} \\
    \vdots & \vdots & \ddots & \vdots
  \end{bmatrix}
  \end{equation*}
  has multiplier norm at most $1$. Then each of the columns has multiplier norm at most $1$, so
  the column-row property shows that each row
  \begin{equation*}
    R_i =
    \begin{bmatrix}
    T_{\psi_{1 i}} & T_{\psi_{2 i}} & T_{\psi_{3 i}} & \ldots
    \end{bmatrix}
  \end{equation*}
  has norm at most $\kappa$, hence
  \begin{equation*}
    \|\Psi^T\|_{\Mult(\cH \otimes \ell^2, \cH \otimes \bC^n)}^2
    =
    \left\|
    \begin{bmatrix}
      R_1 \\ \vdots \\ R_n
    \end{bmatrix}
    \right\|^2
    = \Big\| \sum_{i=1}^n R_i^* R_i \Big\| \le n \kappa^2. \qedhere
  \end{equation*}
\end{proof}

If $A: V \to W$ is a bounded map between operator spaces, then $\|A^{(n)}\| \le n \|A\|$,
and this inequality is sharp in general; see for instance \cite[Exercise 3.10]{Paulsen02}.
In our setting, the preceding lemma, combined with the implication (i) $\Rightarrow$ (ii) of Theorem \ref{thm:weak_product_vector}, implies the following better upper bound.

\begin{cor}
  \label{cor:cr_cb_inv}
 Let $\cH$ be a normalized complete Nevanlinna--Pick space on $X$ that satisfies the column-row property
  with constant $\kappa$ and let
  \begin{equation*}
    \iota : \Mult(\cH) \to \Mult(\cH \odot \cH)
  \end{equation*}
  be the completely contractive inclusion. Then, $\iota$ is a bijection,
  and
  \begin{equation*}
    \| (\iota^{-1})^{(n)} \| \le \sqrt{n} \kappa
  \end{equation*}
  for all $n \in \bN$. \qed
\end{cor}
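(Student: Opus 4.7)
The plan is to combine the factorization theorem for matrices of multipliers of $\cH \odot \cH$ (Theorem \ref{thm:weak_product_vector}) with the row-versus-column estimate of Lemma \ref{lem:row_column_vector}. Bijectivity of $\iota$ is already established: Corollary \ref{cor:column_row} shows $\Mult(\cH) = \Mult(\cH \odot \cH)$ as sets under the column-row hypothesis, and that $\iota$ is a complete contraction is the content of the implication (ii) $\Rightarrow$ (i) of Theorem \ref{thm:weak_product_vector} (or equivalently (iii) $\Rightarrow$ (ii) of Theorem \ref{thm:mult_char}). So the only work is to obtain the quantitative bound on $\|(\iota^{-1})^{(n)}\|$.

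Fix $n \in \bN$ and take a matrix $\Theta$ in the closed unit ball of $M_n(\Mult(\cH \odot \cH))$. First I would apply Theorem \ref{thm:weak_product_vector} to factor $\Theta = \Psi^T \Phi$, where $\Phi$ and $\Psi$ both lie in the closed unit ball of $\Mult(\cH \otimes \bC^n, \cH \otimes \ell^2)$. The factor $\Phi$ is then a contractive multiplier from $\cH \otimes \bC^n$ into $\cH \otimes \ell^2$ by hypothesis. The factor $\Psi^T$, on the other hand, is obtained by transposing a contractive column-of-columns, and this is exactly the setting of Lemma \ref{lem:row_column_vector}: that lemma yields
\begin{equation*}
  \|\Psi^T\|_{\Mult(\cH \otimes \ell^2, \cH \otimes \bC^n)} \le \sqrt{n}\,\kappa.
\end{equation*}

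Composing the two multipliers, $\Theta = \Psi^T \Phi$ defines a multiplier from $\cH \otimes \bC^n$ to itself of norm at most $\sqrt{n}\,\kappa$. Since the $M_n(\Mult(\cH))$ norm of $\Theta$ is by definition the multiplier norm of $\Theta$ on $\cH \otimes \bC^n$, this gives
\begin{equation*}
  \|\Theta\|_{M_n(\Mult(\cH))} \le \sqrt{n}\,\kappa \, \|\Theta\|_{M_n(\Mult(\cH \odot \cH))},
\end{equation*}
which is precisely the claimed bound on $\|(\iota^{-1})^{(n)}\|$.

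I do not expect any real obstacle: the entire argument is a two-line assembly of results already in hand. The only minor point to verify is the compatibility of the norms, namely that the multiplier norm of $\Theta$ viewed as an operator on $\cH \otimes \bC^n$ coincides with its $M_n(\Mult(\cH))$-norm, and that the product $\Psi^T \Phi$ indeed equals $\Theta$ as a matrix-valued multiplier; both are routine in view of the pointwise description of $\Psi^T \Phi$ given before Theorem \ref{thm:weak_product_vector}.
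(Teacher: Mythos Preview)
Your proposal is correct and matches the paper's intended argument exactly: the paper states the corollary with a \qed and indicates just before it that the result follows by combining Lemma \ref{lem:row_column_vector} with the implication (i) $\Rightarrow$ (ii) of Theorem \ref{thm:weak_product_vector}, which is precisely what you do. The bijectivity via Corollary \ref{cor:column_row} and the complete contractivity of $\iota$ via (ii) $\Rightarrow$ (i) of Theorem \ref{thm:weak_product_vector} are also handled as the paper intends.
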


In the Drury--Arveson space, the upper bound in the preceding corollary is essentially best possible.
To see this, we require a refinement of the construction in \cite[Subsection 4.2]{AHM+18}.
Given $\{\varphi_1,\ldots,\varphi_n \}\subset \Mult(\cH)$, the quantities
\begin{equation*}
  \|
  \begin{bmatrix}
   \varphi_1 & \varphi_2 & \cdots & \varphi_n
 \end{bmatrix}
  \|_{\Mult(\cH \otimes \bC^n, \cH)}
 \quad \text{ and } \quad
  \left\|
  \begin{bmatrix}
    \varphi_1 \\ \varphi_2 \\ \vdots \\ \varphi_n 
  \end{bmatrix}
  \right\|_{\Mult(\cH, \cH \otimes \bC^n)}
\end{equation*}
are called the row norm and column norm, respectively.

\begin{lem}
  \label{lem:DA_column_row}
  Let $d \ge 2$. Then for all $n \ge 1$, there exists
  $\{\varphi_1,\ldots,\varphi_n \} \subset \Mult(H^2_d)$ with row norm $1$ and column norm $\sqrt{n}$.
\end{lem}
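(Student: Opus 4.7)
The plan is to exhibit the required multipliers explicitly. For $d \ge 2$ and $n \ge 1$, set
\[
  \varphi_i = \sqrt{\binom{n-1}{i-1}} \, z_1^{n-i} z_2^{i-1} \qquad (1 \le i \le n),
\]
viewed as multipliers of $H^2_d$. Using $\|z^\alpha\|_{H^2_d}^2 = \alpha!/|\alpha|!$, a routine calculation shows that each $\varphi_i$ has $H^2_d$-norm equal to $1$, while the binomial theorem gives
\[
  \sum_{i=1}^n \varphi_i(z) \, \overline{\varphi_i(w)} = (z_1 \overline{w_1} + z_2 \overline{w_2})^{n-1}.
\]

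To handle the row norm, I will invoke the standard complete Nevanlinna--Pick characterization: the row $[\varphi_1,\dots,\varphi_n]$ is a contractive multiplier of $H^2_d$ if and only if
\[
  \frac{1 - (z_1 \overline{w_1}+z_2\overline{w_2})^{n-1}}{1 - \langle z,w\rangle}
\]
is a positive kernel on $\bB_d$. Using the factorization $1 - a^{n-1} = (1-a)\sum_{k=0}^{n-2} a^k$ with $a = z_1\overline{w_1}+z_2\overline{w_2}$, together with the identity $1-a = (1-\langle z,w\rangle) + \sum_{i=3}^d z_i\overline{w_i}$, the quotient becomes
\[
  \Big(\sum_{k=0}^{n-2} (z_1\overline{w_1}+z_2\overline{w_2})^k\Big) \cdot \Big(1 + \frac{\sum_{i=3}^d z_i\overline{w_i}}{1-\langle z,w\rangle}\Big),
\]
which is a product of positive kernels: the first factor is a polynomial with nonnegative coefficients in a positive kernel, and the second is $1$ plus the product of the positive kernels $\sum_{i=3}^d z_i \overline{w_i}$ and $(1-\langle z,w\rangle)^{-1}$ (the latter being the reproducing kernel of $H^2_d$). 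The matching lower bound for the row norm comes from noting that $\sum_i|\varphi_i(z)|^2 = (|z_1|^2+|z_2|^2)^{n-1}$ tends to $1$ along points $(z_1,z_2,0,\dots,0) \in \bB_d$ approaching $\partial \bB_2$.

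For the column norm, the lower bound $\sqrt n$ is immediate: the column multiplication operator applied to the constant function $1$ yields a vector in $H^2_d \otimes \bC^n$ of norm $\bigl(\sum_i \|\varphi_i\|_{H^2_d}^2\bigr)^{1/2} = \sqrt n$. For the matching upper bound, the row contractivity just established forces each individual $\|\varphi_i\|_{\Mult(H^2_d)} \le 1$ (since $\varphi_i$ is one entry of a contractive row), so that $\sum_i M_{\varphi_i}^* M_{\varphi_i} \le \sum_i \|\varphi_i\|_{\Mult}^2 \cdot I \le n I$, giving the desired bound.

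The only substantive step is the positivity check for the row norm; it is essentially bookkeeping, but it does exploit the fact that our multipliers live in just two of the coordinates, so that the kernel quotient splits cleanly into positive pieces for every $d \ge 2$.
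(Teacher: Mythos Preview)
Your proof is correct and uses the same explicit family as the paper (up to an index shift: the paper writes $\psi_k = \binom{n}{k}^{1/2} z_1^k z_2^{n-k}$ for $0 \le k \le n$, giving $n+1$ functions with column norm $\sqrt{n+1}$). The column-norm computations are also essentially identical. The one genuine difference is in verifying that the row norm is at most $1$: the paper argues operator-theoretically, rewriting $\sum_k T_{\psi_k} T_{\psi_k}^*$ as $\sum_{\alpha \in \{1,2\}^n} T_{z_\alpha} T_{z_\alpha}^*$ and invoking the row-contractivity of the coordinate functions directly, whereas you invoke the complete Pick kernel criterion and check positivity of the quotient kernel by an explicit factorization. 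Both routes are clean; the paper's avoids appealing to the Pick characterization, while yours makes the role of the two distinguished coordinates (and the passage from $d=2$ to general $d$) more transparent.
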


\begin{proof}
  For $0 \le k \le n$, let
  \begin{equation*}
    \psi_k =
    \binom{n}{k}^{\frac{1}{2}}
    z_1^{k} z_2^{n-k}.
  \end{equation*}
 If $\alpha = (\alpha_1,\ldots,\alpha_n) \in \{1,2\}^n$
  is an ordered $n$-tuple, let $z_\alpha = z_{\alpha_1} \ldots z_{\alpha_n}$. Since
  each monomial $z_1^k  z_2^{n-k}$ occurs as one of the monomials
  $z_\alpha$ precisely $\binom{n}{k}$ times, we find that
  \begin{equation*}
    \sum_{k=0}^n T_{\psi_k} T_{\psi_k}^*
    = \sum_{k=0}^n \binom{n}{k} T_{z_1^k z_2^{n-k}} T_{z_1^k z_2^{n-k}}^*
    = \sum_{\alpha\in \{1,2\}^n} T_{z_\alpha} T_{z_\alpha}^*.
  \end{equation*}
  It easily
  follows from the fact that the coordinate functions form a row contraction
  that $\sum_{\alpha} T_{z_\alpha} T_{z_\alpha}^* \le I$, hence the row norm
of $ \{\psi_0,\ldots,\psi_n \}$ is at most $1$.

  On the other hand,
  \begin{align*}
   \left\|\begin{bmatrix} T_{\psi_0} \\ \vdots \\ T_{\psi_n}\end{bmatrix} 1 \right\|^2&=\sum_{k=0}^n \|\psi_k\|^2= \sum_{k=0}^n \binom{n}{k} \|z_1^k z_2^{n-k} \|^2 = n+1.
  \end{align*}
  Hence, the column norm of $\{\psi_0,\ldots,\psi_n\}$ is at least $\sqrt{n+1}$. Since
  the column norm is also dominated by
  \begin{equation*}
    \Big\| \sum_{k=0}^n T_{\psi_k}^* T_{\psi_k} \Big\|^{\frac{1}{2}} \le \sqrt{n+1} \max_{0 \le k \le n} \|T_{\psi_k}\| \le \sqrt{n+1},
  \end{equation*}
  the estimates for both the column and the row norm are in fact equalities.
\end{proof}

Thus, we obtain the exact behavior of $\|(\iota^{-1})^{(n)}\|$, up to multiplicative constants,
in the case of the Drury--Arveson space.

\begin{prop}
  Let $d \ge 2$ and consider the completely contractive inclusion
  \begin{equation*}
    \iota : \Mult(H^2_d) \to \Mult(H^2_d \odot H^2_d).
  \end{equation*}
  Then, there exists a constant $\kappa>0$ depending only on $d$, so that
  \begin{equation*}
    \sqrt{n} \le \| (\iota^{-1})^{(n)} \| \le \kappa \sqrt{n}
  \end{equation*}
  for all $n \ge 1$.
\end{prop}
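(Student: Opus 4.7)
The plan is to treat the two inequalities separately. For the upper bound $\|(\iota^{-1})^{(n)}\| \le \kappa \sqrt{n}$, I would invoke Corollary \ref{cor:cr_cb_inv} together with the fact, cited in the introduction from \cite[Theorem 1.5]{AHM+18}, that the Drury--Arveson space $H^2_d$ with finite $d$ satisfies the column-row property with some constant depending only on $d$. This gives the upper estimate immediately, and fixes the constant $\kappa$.

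For the lower bound $\sqrt{n} \le \|(\iota^{-1})^{(n)}\|$, the strategy is to exhibit, for each $n$, a single matrix $\Theta \in M_n(\Mult(H^2_d))$ whose norm in $M_n(\Mult(H^2_d))$ is at least $\sqrt{n}$ while its norm in $M_n(\Mult(H^2_d \odot H^2_d))$ is at most $1$. Concretely, I would start with $\{\varphi_1,\ldots,\varphi_n\} \subset \Mult(H^2_d)$ supplied by Lemma \ref{lem:DA_column_row}, having row norm $1$ and column norm $\sqrt{n}$. I then form $\Theta$ by placing $(\varphi_1,\ldots,\varphi_n)^T$ as the first column of an $n \times n$ matrix of multipliers and zero-padding the remaining columns. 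A direct calculation of the associated multiplication operator on $H^2_d \otimes \bC^n$ shows that $\|\Theta\|_{M_n(\Mult(H^2_d))}$ is simply the column norm of $(\varphi_i)$, hence equal to $\sqrt{n}$.

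To bound $\|\Theta\|_{M_n(\Mult(H^2_d \odot H^2_d))}$ from above, I would combine two facts. First, as already observed in the proof of the preceding proposition in this subsection, Theorem \ref{thm:weak_product_vector} implies that the transpose map on $M_n(\Mult(\cH \odot \cH))$ is isometric; hence
\[
  \|\Theta\|_{M_n(\Mult(H^2_d \odot H^2_d))} = \|\Theta^T\|_{M_n(\Mult(H^2_d \odot H^2_d))}.
\]
Second, since $\iota$ is completely contractive, the right-hand side is dominated by $\|\Theta^T\|_{M_n(\Mult(H^2_d))}$, which is exactly the row norm of $(\varphi_i)$, namely $1$. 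Altogether $\|\Theta\|_{M_n(\Mult(H^2_d \odot H^2_d))} \le 1$, and the lower bound $\|(\iota^{-1})^{(n)}\| \ge \sqrt{n}$ follows at once from the definition of the cb norm applied to the single matrix $\Theta$.

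Rather than a genuine obstacle, the main subtlety is conceptual: the argument exploits a sharp asymmetry between the two operator space structures, namely that transpose is isometric on $M_n(\Mult(H^2_d \odot H^2_d))$ but manifestly not on $M_n(\Mult(H^2_d))$. Lemma \ref{lem:DA_column_row} is tailored to witness this asymmetry by producing an $n$-tuple whose column norm beats its row norm by a factor of $\sqrt{n}$, and it is precisely this factor that must be matched, up to the multiplicative constant $\kappa$, by the upper bound coming from the column-row property.
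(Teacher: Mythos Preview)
Your proposal is correct and follows essentially the same approach as the paper: the upper bound is identical, and for the lower bound you use the very same test matrix (the column of multipliers from Lemma \ref{lem:DA_column_row} padded with zeros). The only cosmetic difference is that the paper bounds $\|\Theta\|_{M_n(\Mult(H^2_d \odot H^2_d))} \le 1$ by invoking the factorization characterization of Theorem \ref{thm:weak_product_vector} directly (writing $\Theta = \Psi^T \Phi$ with $\Phi = [1 \ 0 \ \cdots \ 0]$), whereas you go through the transpose isometry plus the complete contractivity of $\iota$; both routes rest on Theorem \ref{thm:weak_product_vector} and are effectively equivalent.
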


\begin{proof}
  The upper bound follows from Corollary \ref{cor:cr_cb_inv} and the column-row property for $H^2_d$; see \cite[Theorem 1.5]{AHM+18}.

  To obtain the lower bound,
  we use Lemma \ref{lem:DA_column_row} to find a row multiplier
  \begin{equation*}
    \Psi =
    \begin{bmatrix}
      \psi_1 & \psi_2 & \ldots & \psi_n
    \end{bmatrix}
  \end{equation*}
  of norm $1$ so that $\|\Psi^T\|_{\Mult(\cH, \cH\otimes \bC^n)} = \sqrt{n}$.
  Let
  \begin{equation*}
    \Phi =
    \begin{bmatrix}
      1 & 0 & \ldots & 0
    \end{bmatrix}.
  \end{equation*}
  The implication (ii) $\Rightarrow$ (i) of Theorem \ref{thm:weak_product_vector} shows that
  $\Psi^T \Phi$ belongs to the closed unit ball of $M_n(\Mult(\cH \odot \cH))$. On the other hand,
  \begin{equation*}
    \| \Psi^T \Phi\|_{M_n(\Mult(\cH))} = \| \Psi^T \|_{\Mult(\cH, \cH\otimes \bC^n)} = \sqrt{n}
  \end{equation*}
  thus showing that $\| (\iota^{-1})^{(n)} \| \ge \sqrt{n}$.
\end{proof}

\subsection*{Note added in proof} The recent paper \cite{Hartz20} shows that every complete Nevanlinna-Pick space satisfies the column-row property with constant 1. Hence the additional assumption in Corollary \ref{cor:column_row} is automatically satisfied

\bibliographystyle{amsplain}
\bibliography{wp_literature}

\end{document}